\newcommand{\dn}{\mathord{\downarrow}\hspace{0.05em}}
\newcommand{\up}{\mathord{\uparrow}\hspace{0.05em}}
\newtheorem{theorem}{Theorem}[section]
\newtheorem{lemma}[theorem]{Lemma}
\theoremstyle{definition}
\newtheorem{definition}[theorem]{Definition}
\newtheorem{example}[theorem]{Example}
\newtheorem{proposition}[theorem]{Proposition}
\theoremstyle{remark}
\newtheorem{remark}[theorem]{Remark}
\numberwithin{equation}{section}
\begin{document}

\title{Nonexistence of $k$-bounded sobrification}

\author{Jing Lu}
\address{School of Mathematics and Information Science, Shaanxi Normal University, Xi'an 710119, P.R. China}
\email{lujing0926@126.com }
\thanks{Corresponding author: Kaiyun Wang}

\thanks{This work is supported by the National Natural Science Foundation of
China (Grant nos. 11531009, 11871320), Innovation Capability Support Program of Shaanxi (Program No. 2020KJXX-066) and Singapore Ministry of Education
Academic Research Fund Tier 2 grant MOE2016-T2-1-083 (M4020333),
NTU Tier 1 grants RG32/16 (M4011672).}

\author{Kaiyun Wang}
\address{School of Mathematics and Information Science, Shaanxi Normal University, Xi'an 710119, P.R. China}
\email{wangkaiyun@snnu.edu.cn}

\author{Guohua Wu}
\address{Division of Mathematical Sciences, School of Physical and Mathematical
Sciences, Nanyang Technological University,  637371, Singapore}
\email{guohua@ntu.edu.sg}

\author{Bin Zhao}
\address{School of Mathematics and Information Science, Shaanxi Normal University, Xi'an 710119, P.R. China}
\email{zhaobin@snnu.edu.cn}

\subjclass[2010]{Primary 54D10; Secondary  06B35}

\date{}

\keywords{Alexandroff topology;\ Scott topology;\ $k$-bounded sober space;\ irreducible set}
\thanks{}
\begin{abstract}
In this paper, we will focus on $k$-bounded sober spaces and show the existence of a $T_0$ space $X$ not admitting any  $k$-bounded sobrification. This strengthens a result of Zhao, Lu and Wang, who proved that the canonical $k$-bounded sobrification does not exist. Our work provides a complete solution to a question of Zhao and Ho, and shows that unlike  {\bf Sob}
and {\bf BSob},
the category  {\bf KBSob} of all $k$-bounded sober spaces is not a reflective subcategory of the category {\bf Top$_0$} of all $T_0$ spaces.
Furthermore, we introduce the    notion of $qk$-bounded sober spaces and prove that
the category  {\bf KBSob} is a full reflective subcategory of the category {\bf QKBSob} of all $qk$-bounded sober spaces
and continuous mappings preserving existing irreducible suprema.
\end{abstract}

\maketitle

\section{Introduction}

We know that every $T_0$ spaces has a sobrification (see \cite{GG03}), or equivalently,  in terms of category, {\bf Sob}, the
category  of all  sober spaces, is  a full reflective subcategory of the category {\bf Top$_0$} of all $T_0$ spaces.
In \cite{DZ10},  Zhao and Fan introduced bounded sobriety, a weak notion of sobriety, and  proved that
every $T_0$ space has a bounded sobrification. Equivalently,  {\bf BSob}, the
category  of all bounded sober spaces, is  a full reflective subcategory of {\bf Top$_0$}. Being motivated by the definition of the
Scott topology on posets, Zhao and Ho  introduced in \cite{DZ15} a
method of deriving a new topology $\tau_{SI}$ from a given
$\tau$, in a manner similar to the one of deriving  Scott topology from the Alexandroff topology on a
poset. They named this
topology as irreducibly-derived topology, or,
$SI$-topology, for short. It is this $SI$-topology that leads Zhao and Ho to define an even weaker notion of sobriety of $T_0$ spaces in the same paper, the $k$-bounded sobriety.  Zhao and Ho proved that
a $T_0$ space $(X, \tau)$ is  $k$-bounded sober if and only if $\tau=\tau_{SI}$, and that the Scott spaces of continuous posets are always $k$-bounded sober.

After seeing that every $T_0$ space has a sobrification and a bounded sobrification, Zhao and Ho raised a question in \cite{DZ15}, whether $KB(X)$, the set of all irreducible closed sets of a $T_0$ space $X$ whose suprema
exist, is the canonical $k$-bounded sobrification of $X$ in the sense of Keimel and Lawson (see \cite{KK09}). In \cite{ZLW19}, Zhao, Lu and Wang provided a negative answer to it. That is, Zhao, Lu and Wang   added one top element above the dcpo constructed by Johnstone (see \cite{PJ81}), showing that the canonical $k$-bounded sobrification does not exist.
It left open whether any $T_0$ space admits a
$k$-bounded sobrification, which may not be canonical. In this paper, in section 3, we will give a negative answer to it, by showing the existence of a $T_0$ space admitting no $k$-bounded sobrification at all. Our result strengthens the work of Zhao, Lu and Wang above, and provides a complete answer to the question of Zhao and Ho. This shows that unlike  {\bf Sob}
and {\bf BSob},
the category {\bf KBSob} of all $k$-bounded sober spaces is not a reflective subcategory of the category {\bf Top$_0$} of all $T_0$ spaces.
Furthermore,  we show that
the category {\bf KBSob}
is not a  reflective subcategory of the category {\bf Top$_k$}, where {\bf Top$_k$} denotes the category of all  $T_{0}$ spaces and continuous mappings preserving existing irreducible suprema. Finally, we introduce the notion of $qk$-bounded sober spaces and prove that
the category  {\bf KBSob} is a full reflective subcategory of the category {\bf QKBSob} of all $qk$-bounded sober spaces and continuous mappings preserving existing irreducible suprema.

Throughout the paper, we refer to \cite{BA02,GG03} for order
theory,  \cite{RE77} for  general topology and   \cite{JA90} for category theory.

\section{Basics of $k$-bounded sober spaces}
Fix $P$ as a poset. A non-empty subset $D$ of   $P$ is {\em directed}
if every finite subset of $D$ has an upper bound in $D$. A subset
$A$ of $P$ is an {\em upper set} if $A=\up\!A=\{x\in P\mid x\geq  y$ for some
$y\in A\}$. The Alexandroff topology $\Upsilon(P)$ on $P$ is the
topology consisting of all its upper subsets.
A subset $U$ of  $P$ is {\em Scott open} if (i) $U$$=\uparrow\!U$ and (ii) for any directed subset $D$, $\bigvee D\in U$ implies
$D\cap U\neq \emptyset$ whenever $\bigvee D$ exists. Scott open sets on  $P$ form the Scott topology $\sigma(P)$ (see [\cite{GG03}, p. 134]).

 Let $(X, \tau)$ be a topological space. A non-empty subset $F$ of $X$ is  {\em $\tau$-irreducible}, or simply, {\em irreducible},
if whenever $F\subseteq A\cup B$ for closed sets $A$, $B\subseteq X$, one has either $F\subseteq A$ or $F\subseteq B$. A topological space is said to be {\em sober} if every irreducible  closed set  is the closure of a unique singleton.
For a $T_{0}$ space $(X,\tau)$, the {\em specialization order} $\leq$ on $X$ is defined by $x\leq y$ if and only if $x\in cl(\{y\})$ (see [\cite{GG03}, p. 42]). Unless otherwise stated, throughout
the paper, whenever an order-theoretic concept is mentioned in the context of a $T_{0}$ space $X$, it is to be interpreted with respect
to the specialization order on $X$.

\begin{proposition} (\cite{GG03}) Let $X$ be a $T_0$ space.   Then

\begin{enumerate}
\item[(1)] for  $a\in X$, $\dn a := \{x\in X\mid x\leq a\}=cl(\{a\})$.

\item[(2)] if $U\subseteq X$ is an open subset, then  $\up U=U$.

\item[(3)] if $U\subseteq X$ is a closed subset, then $\dn U=U$.

\item[(4)] for any subset $A\subseteq X$,
 $\bigvee\! A$ exists if and only if $\bigvee\! cl(A)$ exists; moreover,  $\bigvee\! A=\bigvee\! cl(A)$, if they exist.
 \end{enumerate}
\end{proposition}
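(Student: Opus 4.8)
The plan is to derive all four items directly from the definition of the specialization order $\leq$ together with the elementary fact, valid in any topological space, that a point $y$ lies in $cl(A)$ precisely when every open set containing $y$ meets $A$. Items (1)--(3) are then essentially bookkeeping, and (4) reduces to a single observation about upper bounds.

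For (1), unwinding definitions suffices: for any $x\in X$ we have $x\in\dn a$ iff $x\leq a$ iff $x\in cl(\{a\})$, so $\dn a=cl(\{a\})$. For (2), the inclusion $U\subseteq\up U$ is immediate; conversely, if $x\in\up U$ then $y\leq x$ for some $y\in U$, so $y\in cl(\{x\})$ by (1), and since $U$ is an open neighbourhood of $y$ it must intersect $\{x\}$, forcing $x\in U$; hence $\up U=U$. For (3), again $U\subseteq\dn U$ is clear, while using (1) and closedness of $U$ we get $\dn U=\bigcup_{a\in U}\dn a=\bigcup_{a\in U}cl(\{a\})\subseteq cl(U)=U$.

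For (4), the crux is to show that $A$ and $cl(A)$ have exactly the same set of upper bounds in $X$. Since $A\subseteq cl(A)$, every upper bound of $cl(A)$ is an upper bound of $A$. Conversely, if $b$ is an upper bound of $A$, then $A\subseteq\dn b=cl(\{b\})$ by (1), and because $cl(\{b\})$ is closed this yields $cl(A)\subseteq cl(\{b\})=\dn b$, so $b$ is an upper bound of $cl(A)$. Two subsets with the same upper bounds have the same least upper bound whenever either one exists; therefore $\bigvee A$ exists if and only if $\bigvee cl(A)$ exists, and in that case $\bigvee A=\bigvee cl(A)$.

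I do not expect any genuine obstacle: the proposition is a standard compilation drawn from \cite{GG03}, and the only point worth isolating explicitly is the upper-bound coincidence used in (4), from which both the stated equivalence and the equality of suprema follow formally.
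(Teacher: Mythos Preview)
Your argument is correct in every part; each step follows directly from the definition of the specialization order, and the upper-bound coincidence you isolate for (4) is exactly the right observation. The paper itself does not supply a proof of this proposition---it is quoted from \cite{GG03} without argument---so there is no authorial proof to compare against; your write-up would serve perfectly well as the omitted verification.
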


\begin{definition}(\cite{DZ15}) Let $(X,\tau)$ be a $T_{0}$ space. A subset $U$ of $X$ is called $SI$-open if the following conditions are satisfied:
\begin{enumerate}
\item[(1)] $U\in\tau$;

\item[(2)] for every  irreducible subset $F$ of $X$, $\bigvee F\in U$ implies $F\cap U\neq\emptyset$ whenever $\bigvee F$ exists.
 \end{enumerate}
\end{definition}

 The set of all $SI$-open subsets of $(X,\tau)$ is denoted by
$\tau_{SI}$. It is straightforward to verify that $\tau_{SI}$
is a topology on $X$. We call $\tau_{SI}$  the irreducibly-derived
topology (or simply, $SI$-topology).\ The space $(X,\tau_{SI})$ will
also be simply written as $SI(X)$. Complements of $SI$-open sets are
called $SI$-closed sets. Note that $SI(P,\Upsilon(P))=(P,\sigma(P))$ for every poset $P$.

\begin{proposition}(\cite{DZ15}) A closed subset $C$ of a $T_{0}$ space $(X,\tau)$ is $SI$-closed if and only if for every  irreducible subset $F$ of $X$, $F\subseteq C$ implies $\bigvee F\in C$ whenever
$\bigvee F$ exists.
\end{proposition}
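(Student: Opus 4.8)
The plan is to unwind the definitions and observe that the asserted condition on $C$ is nothing more than the contrapositive form of clause (2) of Definition 2.2 applied to the complement $U:=X\setminus C$.

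First I would note that, since $C$ is assumed to be closed, its complement $U=X\setminus C$ already lies in $\tau$, so clause (1) of Definition 2.2 is automatically satisfied by $U$. Hence, by the very definition of $SI$-closed (that is, $X\setminus C$ is $SI$-open), $C$ is $SI$-closed if and only if $U$ satisfies clause (2): for every irreducible subset $F$ of $X$ for which $\bigvee F$ exists, $\bigvee F\in U$ implies $F\cap U\neq\emptyset$.

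Next I would fix an irreducible subset $F$ with $\bigvee F$ existing and rewrite the implication ``$\bigvee F\in U\Rightarrow F\cap U\neq\emptyset$'' in contrapositive form. The negation of its conclusion, $F\cap U=\emptyset$, says precisely $F\subseteq C$, and the negation of its hypothesis, $\bigvee F\notin U$, says precisely $\bigvee F\in C$. So for this particular $F$ the implication is equivalent to ``$F\subseteq C\Rightarrow \bigvee F\in C$''. Since this holds for each irreducible $F$ whose supremum exists, quantifying over all such $F$ shows that clause (2) for $U$ is equivalent to the condition stated in the proposition, and the proof is complete.

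I do not expect any real obstacle here beyond bookkeeping; the only point needing a moment of care is that the side condition ``whenever $\bigvee F$ exists'' and the range of quantification (all irreducible subsets $F$) are literally the same on both sides of the equivalence, so that the contraposition can be performed uniformly in $F$. No appeal to Proposition 2.1 is required, though one could alternatively recast the argument in terms of irreducible \emph{closed} sets using part (4) of that proposition, since $\bigvee F=\bigvee cl(F)$ and $cl(F)$ is irreducible whenever $F$ is.
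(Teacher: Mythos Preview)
Your argument is correct: the equivalence is indeed nothing more than the contrapositive of clause~(2) in Definition~2.2 applied to $U=X\setminus C$, and you have carried out the bookkeeping carefully. Note that the paper does not supply its own proof of this proposition at all---it is quoted from \cite{DZ15} as background---so there is no alternative argument to compare against; your direct complement-and-contrapose approach is the expected one.
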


\begin{proposition}(\cite{DZ15}) A continuous mapping $f : (X, \tau)\longrightarrow (Y, \delta)$ is a  continuous mapping between $SI(X)$ and $SI(Y)$ if and only if  $f(\bigvee F)=\bigvee f(F)$ holds for each irreducible subset  $F$ in $X$ whenever $\bigvee F$ exists.
\end{proposition}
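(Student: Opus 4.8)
The plan is to prove the two directions of the equivalence directly, leaning on the preceding proposition (the characterization of $SI$-closed sets via irreducible suprema) together with two elementary facts: a continuous map between $T_0$ spaces is monotone for the specialization orders, since $f(cl(\{x\}))\subseteq cl(\{f(x)\})$; and it carries irreducible sets to irreducible sets, since if $f(F)\subseteq A\cup B$ with $A,B$ closed in $Y$ then $F\subseteq f^{-1}(A)\cup f^{-1}(B)$ with both preimages closed, and irreducibility of $F$ transfers to $f(F)$.

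For the ``if'' direction I would assume $f(\bigvee F)=\bigvee f(F)$ whenever $F$ is irreducible and $\bigvee F$ exists, and check that $f^{-1}(V)\in\tau_{SI}$ for each $V\in\delta_{SI}$. Condition~(1) of the definition of $SI$-open is immediate from ordinary continuity of $f\colon(X,\tau)\to(Y,\delta)$. For condition~(2), take an irreducible $F\subseteq X$ with $\bigvee F$ existing and $\bigvee F\in f^{-1}(V)$, i.e.\ $f(\bigvee F)\in V$; then $f(F)$ is irreducible in $Y$, the hypothesis gives that $\bigvee f(F)$ exists and equals $f(\bigvee F)\in V$, and $SI$-openness of $V$ forces $f(F)\cap V\neq\emptyset$, hence some $x\in F$ has $f(x)\in V$, so $F\cap f^{-1}(V)\neq\emptyset$. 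Thus $f^{-1}(V)\in\tau_{SI}$.

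For the ``only if'' direction I would assume $f$ is continuous from $SI(X)$ to $SI(Y)$ and fix an irreducible $F\subseteq X$ with $\bigvee F$ existing. Monotonicity makes $f(\bigvee F)$ an upper bound of $f(F)$, so it remains to see it is the least one. The key step is the observation that for every $y\in Y$ the set $\dn y=cl(\{y\})$ is $SI$-closed: it is closed, and any irreducible $G\subseteq\dn y$ with $\bigvee G$ existing has $y$ as an upper bound of $G$, so $\bigvee G\leq y$, i.e.\ $\bigvee G\in\dn y$, whence the preceding proposition applies. Now let $y$ be an arbitrary upper bound of $f(F)$; since $f$ is $SI$-continuous and $\dn y$ is $SI$-closed in $Y$, the set $f^{-1}(\dn y)$ is $SI$-closed in $X$, and $F\subseteq f^{-1}(\dn y)$ because each $x\in F$ satisfies $f(x)\leq y$. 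Applying the preceding proposition to the $SI$-closed set $f^{-1}(\dn y)$ and the irreducible set $F$ gives $\bigvee F\in f^{-1}(\dn y)$, i.e.\ $f(\bigvee F)\leq y$. Hence $f(\bigvee F)$ is the least upper bound of $f(F)$, so $\bigvee f(F)$ exists and equals $f(\bigvee F)$.

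I do not expect a genuine obstacle; the argument is short and uses only the two preceding propositions plus routine specialization-order bookkeeping. The two minor points to handle with care are that $f(F)$ is nonempty (hence a legitimate irreducible set) because $F$ is, and that the hypothesis in the ``if'' direction must be read as asserting the existence of $\bigvee f(F)$, not merely equating its value with $f(\bigvee F)$.
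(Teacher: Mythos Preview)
Your argument is correct. Both directions are handled cleanly: the ``if'' direction verifies the two defining conditions of $SI$-openness for $f^{-1}(V)$ using that continuous images of irreducible sets are irreducible, and the ``only if'' direction exploits the $SI$-closedness of principal ideals $\dn y$ together with Proposition~2.3 to show $f(\bigvee F)$ is the least upper bound of $f(F)$. The auxiliary facts you invoke (monotonicity of continuous maps for the specialization order, preservation of irreducibility) are standard and correctly justified.

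There is nothing to compare against here: the paper does not supply its own proof of this proposition but simply quotes it from \cite{DZ15}. Your write-up is a faithful reconstruction of the standard argument and would serve as a self-contained proof.
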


Let $X$ and $Y$ be $T_{0}$ spaces. A mapping $f: X\longrightarrow Y$  is called   preserving existing irreducible suprema if $f(\bigvee F)=\bigvee f(F)$ holds for each irreducible subset  $F$ in $X$ whenever $\bigvee F$ exists.

\begin{definition}(\cite{DZ15}) A space $(X, \tau)$ is $k$-bounded sober if for any irreducible closed subset $F$ of $X$
whose $\bigvee F$ exists, there is a unique point $x\in X$ such that
$F=cl(\{x\})$.
\end{definition}

\begin{proposition}(\cite{DZ15}) A $T_{0}$ space $(X, \tau)$ is $k$-bounded sober if and only if $\tau=\tau_{SI}$.
\end{proposition}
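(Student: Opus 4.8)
The plan is to prove the two implications separately, leaning on Proposition 2.1 and on the trivial inclusion $\tau_{SI}\subseteq\tau$ (every $SI$-open set is open by clause (1) of Definition 2.2).

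First I would handle the ``only if'' direction. Assuming $(X,\tau)$ is $k$-bounded sober, it remains to show $\tau\subseteq\tau_{SI}$, i.e. that an arbitrary $U\in\tau$ satisfies clause (2) of Definition 2.2. So let $F$ be an irreducible subset of $X$ with $\bigvee F$ existing and $\bigvee F\in U$. Since $F$ need not be closed, the first step is to pass to $cl(F)$: it is still irreducible (irreducibility is preserved under closure), it is closed, and by Proposition 2.1(4) its supremum exists and equals $\bigvee F$. Applying $k$-bounded sobriety to $cl(F)$ produces a point $x$ with $cl(F)=cl(\{x\})=\dn x$, and then $x=\bigvee\dn x=\bigvee cl(F)=\bigvee F\in U$. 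As $x\in cl(F)$ and $U$ is an open neighbourhood of $x$, $U$ must meet $F$, giving $F\cap U\neq\emptyset$, which is what clause (2) demands.

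For the ``if'' direction I would assume $\tau=\tau_{SI}$ and, given an irreducible closed set $F$ with $\bigvee F$ existing, show that $x:=\bigvee F$ is the required (necessarily unique, by the $T_0$ axiom) generic point. One inclusion is automatic: $x$ is an upper bound of $F$, so $F\subseteq\dn x=cl(\{x\})$ by Proposition 2.1(1). For the reverse inclusion it suffices to verify $x\in F$, since $F$ is closed and hence a down-set by Proposition 2.1(3). If $x\notin F$, then $X\setminus F$ is open, hence $SI$-open by hypothesis, and $\bigvee F=x\in X\setminus F$; clause (2) of Definition 2.2 applied to the irreducible set $F$ then forces $F\cap(X\setminus F)\neq\emptyset$, a contradiction. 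Hence $x\in F$ and $F=cl(\{x\})$.

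The whole argument is routine; the only place calling for mild care is the ``only if'' direction, where the irreducible set supplied is not assumed closed and one must first replace it by its closure, using stability of irreducibility under closure together with Proposition 2.1(4) to carry the supremum along unchanged. I do not anticipate any genuine obstacle.
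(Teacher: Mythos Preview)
Your argument is correct and is the standard route to this equivalence. Note, however, that the paper does not actually supply a proof of this proposition: it is quoted from \cite{DZ15} and stated without proof, so there is no in-paper argument to compare against. Your write-up would serve perfectly well as the omitted proof.
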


A $T_0$ space is said to be {\em bounded sober} (see \cite{DZ10}) if every upper bounded  irreducible closed set is the closure of a unique singleton. In \cite{DZ15}, Zhao and Ho presented that the following implications hold:
\begin{center}
\centering
sobriety$\Longrightarrow$ bounded sobriety$\Longrightarrow$ $k$-bounded sobriety.
\end{center}

Let {\bf KBSob} denote the category of all $k$-bounded sober spaces and continuous mappings.

It is well-known that any product of sober spaces is sober (see \cite{GG03}), where in the product space, irreducible closed sets are products of irreducible closed subsets from factor spaces.

\begin{lemma} (\cite{JG13}) Let $\{X_i\}_{i\in I}$ be a family of topological spaces. In the product space $\prod_{i\in I}X_i$, irreducible closed sets are subsets of $\prod_{i\in I}X_i$  in
the form $\prod_{i\in I}C_i$, where for each $i\in I$, $C_i$ is an irreducible closed subset  of $X_i$.
\end{lemma}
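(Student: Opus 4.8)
The approach is to reconstruct an arbitrary irreducible closed set from the closures of its projections. Let $F\subseteq\prod_{i\in I}X_i$ be irreducible and closed, write $\pi_i\colon\prod_{j\in I}X_j\to X_i$ for the $i$-th projection, and put $C_i:=cl(\pi_i(F))$ for each $i\in I$. Since each $\pi_i$ is continuous, the continuous image of an irreducible set is irreducible, and the closure of an irreducible set is irreducible, each $C_i$ is an irreducible closed subset of $X_i$; it is nonempty because $F$ is. From $\pi_i(F)\subseteq C_i$ for all $i$ we get the easy inclusion $F\subseteq\prod_{i\in I}\pi_i(F)\subseteq\prod_{i\in I}C_i$. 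The whole content of the lemma is thus the reverse inclusion $\prod_{i\in I}C_i\subseteq F$.

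For this step I would first recall the open-set form of irreducibility: a nonempty subset $F$ of a space is irreducible if and only if for all open sets $U,V$ meeting $F$ the set $U\cap V$ also meets $F$; a trivial induction then shows that if finitely many open sets $U_1,\dots,U_n$ each meet $F$, so does $U_1\cap\cdots\cap U_n$. Now take $x=(x_i)_{i\in I}\in\prod_{i\in I}C_i$ and let $W=\pi_{i_1}^{-1}(U_{i_1})\cap\cdots\cap\pi_{i_n}^{-1}(U_{i_n})$ be a basic open neighbourhood of $x$, where each $U_{i_k}$ is an open neighbourhood of $x_{i_k}$ in $X_{i_k}$. Since $x_{i_k}\in C_{i_k}=cl(\pi_{i_k}(F))$, the open set $U_{i_k}$ meets $\pi_{i_k}(F)$, and therefore $\pi_{i_k}^{-1}(U_{i_k})$ meets $F$. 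By the finite-intersection property above, $W$ meets $F$. Hence every basic open neighbourhood of $x$ meets $F$, and as $F$ is closed we conclude $x\in F$. Together with the previous inclusion this yields $F=\prod_{i\in I}C_i$ with each $C_i$ irreducible closed, completing the proof.

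I do not expect a serious obstacle here: the argument is short and uses only standard facts. The one step that carries the weight is the reverse inclusion $\prod_{i\in I}C_i\subseteq F$, where irreducibility of $F$ --- not just closedness --- is essential; it says that an irreducible closed subset of a product is ``rectangular'', a property that fails badly for closed sets in general. Care is only needed in the finite-intersection reformulation of irreducibility, which is genuinely used because a basic open set in the product involves intersecting finitely many (rather than just two) subbasic opens. For completeness one could also verify the converse, that a product $\prod_{i\in I}C_i$ of irreducible closed sets is again irreducible closed, so that the lemma actually characterises the irreducible closed subsets of a product; but this direction is not needed for what follows.
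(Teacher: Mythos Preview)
Your argument is correct and is essentially the standard proof of this result (as given, e.g., in the cited reference \cite{JG13}). The paper itself does not supply a proof of this lemma; it merely quotes it from Goubault-Larrecq, so there is no in-paper argument to compare against. Your reconstruction via $C_i:=cl(\pi_i(F))$ together with the finite-intersection reformulation of irreducibility is exactly the expected route.
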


The following proposition says that $k$-bounded sober spaces
 are also closed under products.

\begin{proposition} Products of $k$-bounded sober spaces are $k$-bounded sober.
\end{proposition}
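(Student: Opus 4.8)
The plan is to verify directly that $\prod_{i\in I}X_i$ satisfies the defining condition of $k$-bounded sobriety (Definition following Proposition 2.5), using Lemma 2.7 to describe irreducible closed sets and the coordinatewise description of the specialization order on a product of $T_0$ spaces. So let $X = \prod_{i\in I}X_i$ with each $X_i$ $k$-bounded sober, and let $F$ be an irreducible closed subset of $X$ such that $\bigvee F$ exists in $X$. By Lemma 2.7, $F = \prod_{i\in I}C_i$ with each $C_i$ an irreducible closed subset of $X_i$.

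First I would record that the specialization order on $X$ is the pointwise order: $x \leq y$ in $X$ iff $x_i \leq y_i$ in $X_i$ for every $i$; this is standard for the product of $T_0$ spaces, and it gives that suprema in $X$, when they exist, are computed coordinatewise, and that $cl(\{x\}) = \prod_{i\in I} cl(\{x_i\})$ by Proposition 2.1(1). The key step is to show that each $C_i$ has a supremum in $X_i$. Let $a = \bigvee F$; I claim $a_i = \bigvee C_i$ for each $i$. Since $a$ is an upper bound of $F = \prod_j C_j$, each $a_i$ is an upper bound of $C_i$ (projecting). Conversely, if $b_i$ is any upper bound of $C_i$ in $X_i$, pick any point $c \in F$ (recall $F$ is nonempty, being irreducible) and form $b \in X$ by $b_i' = b_i$ and $b_j' = a_j$ for $j \neq i$; then $b'$ is an upper bound of $F$, hence $a \leq b'$, hence $a_i \leq b_i$. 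Thus $a_i$ is the least upper bound of $C_i$, i.e. $\bigvee C_i$ exists and equals $a_i$.

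Now $k$-bounded sobriety of each $X_i$ applies: since $C_i$ is irreducible closed with $\bigvee C_i$ existing, there is a unique $x_i \in X_i$ with $C_i = cl(\{x_i\})$. Set $x = (x_i)_{i\in I} \in X$. Then $cl(\{x\}) = \prod_{i} cl(\{x_i\}) = \prod_i C_i = F$, so $F$ is the closure of a singleton; uniqueness of $x$ follows because $X$ is $T_0$ (the closure of a singleton determines the point) — or, alternatively, coordinatewise from the uniqueness of each $x_i$ together with the fact that nonemptiness of $F$ forces every $C_i$ to be nonempty so that each coordinate is pinned down. This establishes that $X$ is $k$-bounded sober.

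The only mildly delicate point — and the one I would make sure to state carefully rather than the routine coordinatewise bookkeeping — is the argument that the existence of $\bigvee F$ in the product forces the existence of $\bigvee C_i$ in each factor, since $k$-bounded sobriety is a condition that only constrains those irreducible closed sets admitting suprema; the perturbation-in-one-coordinate trick above is what makes this work, and it relies on $F$ being a genuine product $\prod_i C_i$ (so that swapping one coordinate of a point of $F$ for an upper bound of $C_i$ keeps us above all of $F$) and on every $C_i$ being nonempty. I expect no further obstacles; Propositions 2.1(1) and Lemma 2.7 supply everything else.
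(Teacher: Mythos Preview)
Your proof is correct and follows essentially the same route as the paper: decompose the irreducible closed set via Lemma~2.7, use the coordinatewise specialization order to transfer the supremum to each factor, invoke $k$-bounded sobriety of each $X_i$, and reassemble. The paper is terser---it asserts ``This implies that for each $i\in I$, $\bigvee C_i = c_i$'' without justification---whereas you spell out the perturbation-in-one-coordinate argument; but this is a difference in detail, not in strategy. (One cosmetic remark: the point $c\in F$ you pick is never used in forming $b'$, so that clause can be dropped.)
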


\begin{proof}Let $\{X_i\}_{i\in I}$ be a family of $k$-bounded sober spaces and $C$  an irreducible
closed subset of  $\prod_{i\in I}X_i$  whose  $\bigvee C$ exists. By Lemma 2.7, $C=\prod_{i\in I}C_i$, where for each $i\in I$, $C_i$ is an irreducible closed subset  of $X_i$.

Let $\bigvee C=(c_i)_{i\in I}$. Recall that  the specialization order $\leq$ on $\prod_{i\in I}X_i$ is the product of the specialization orders $\leq_i$. That is, for  $x=(x_i)_{i\in I}, y=(y_i)_{i\in I}\in \prod_{i\in I}X_i$, $x\leq y$ if and only if $x_i\leq_i y_i$ for all $i\in I$, where $\leq_i$ is the specialization order on $X_i$. This implies that for each $i\in I$, $\bigvee C_i=c_i$. Since $X_i$ is a $k$-bounded sober space, we have that $C_i=\dn c_i$, and hence, \[C=\prod_{i\in I}C_i=\prod_{i\in I}\dn c_i=\dn(c_i)_{i\in I},\] showing that  $\prod_{i\in I}X_i$ is $k$-bounded sober.  \end{proof}

\section{A $T_0$ space admitting no $k$-bounded sobrification}

In this section, we will construct a $T_0$ space admitting no $k$-bounded sobrification at all.

\begin{definition}A  $k$-bounded sobrification of a $T_{0}$ space $(X,\tau)$ consists of a k-bounded sober space $\overline {X}$ and a continuous mapping $\xi:X\longrightarrow \overline {X}$
with the following universal property: For every continuous
mapping $f$ from $X$ to a k-bounded sober space $S$, there exists a
unique continuous mapping $\bar{f}: \overline {X}\longrightarrow
S$ such that $\bar{f}\circ \xi=f$, i.e., the following
diagram commutes:\\
\newline
\begin{picture}(150,100)(50,50)
\put(275,81){\makebox(0,0){$S$}}
\put(275,135){\makebox(0,0){$\overline {X}$}}
\put(220,135){\vector(1,0){45.0}}
\put(215,135){\makebox(0,0){$X$}}
\put(250,142){\makebox(0,0){$\xi$}}
\put(230,100){\makebox(0,0){$f$}}
\put(283,110){\makebox(0,0){$\overline {f}$}}
 \put(220,130){\vector(1,-1){45.0}}
\put(275,125){\vector(0,-1){35.0}}
\put(270,60){\makebox(0,0){Figure 1}}
\end{picture}
\end{definition}

\begin{remark} We note that for a $T_{0}$ space $X$, if $(\overline {X},\xi)$ is the  $k$-bounded sobrification of $X$, then $\xi$ is actually an embedding mapping.
Indeed, $\xi$ is a continuous mapping. Define a set $X^{S}$ by
$$X^{S}=\{A\subseteq X\mid A \mbox{ is an irreducible closed set}\}.$$
Topologize $X^{S}$ by open sets $U^{S}=\{A\in X^{S}\mid A\cap U\neq \emptyset\}$
for each open set $U$ of $X$.
Then $(X^{S}, \dn_{X})$ is the sobrification of $X$ (see, e.g., \cite{GG03}, Exercise V-4.9). Here, $\dn_{X}: X\longrightarrow X^{S}$ is defined by $\dn_{X}(x)=\dn x$ for  $x\in X$.
Hence, $X^{S}$ is a k-bounded sober space. By Definition 3,1, there exists  a unique continuous mapping $\overline {\dn_{X}}: \overline {X}\longrightarrow
X^{S}$ such that $\overline {\dn_{X}}\circ \xi=\dn_{X}$, i.e., the following
diagram commutes:\\
\newline
\begin{picture}(150,100)(50,50)
\put(275,81){\makebox(0,0){$X^S$}}
\put(275,135){\makebox(0,0){$\overline {X}$}}
\put(220,135){\vector(1,0){45.0}}
\put(215,135){\makebox(0,0){$X$}}
\put(250,142){\makebox(0,0){$\xi$}}
\put(232,100){\makebox(0,0){$\dn_{X}$}}
\put(286,110){\makebox(0,0){$\overline {\dn_{X}}$}}
 \put(220,130){\vector(1,-1){45.0}}
\put(275,125){\vector(0,-1){35.0}}
\put(270,60){\makebox(0,0){Figure 2}}
\end{picture}\\
Since $\dn_{X}$ is  injective,  $\xi$ is also injective.
Let $U$ be an open set of $X$. Then $\xi(U)=\xi(X)\cap\overline {\dn_{X}}^{-1}(U^{S})$, and thus $\xi(U)$ is an open subset of the
subspace $\xi(X)$. This shows that $\xi$ is an embedding mapping.
\end{remark}

We now give   an example showing that $k$-bounded
sobrifications of a $T_0$ space $X$ may not exist. Our construction is motivated by Johnstone's famous
counterexample (see \cite{PJ81}).

\begin{theorem} \label{nonsobrification} There exists a $T_0$ space $X$ admitting no  $k$-bounded
sobrification.
\end{theorem}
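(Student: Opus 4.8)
The plan is to exhibit a concrete $T_0$ space $X$ built in the spirit of Johnstone's dcpo and to derive a contradiction from the assumption that it possesses a $k$-bounded sobrification. Concretely, $X$ will be a suitable Johnstone-type space (roughly, a telescopic amalgam of countably many copies of Johnstone's poset, each equipped with a Scott-type topology) carrying an increasing sequence $A_0\subseteq A_1\subseteq\cdots$ of irreducible closed subsets with the following features: each $A_n$ is bounded above in $X$ but has no supremum in $X$; each $A_n$ has no greatest element; the amalgamation is chosen so that the point which a $k$-bounded sober extension of $X$ would have to assign to $A_n$ as its supremum is forced to appear ``inside the next copy'', below $A_{n+1}$; and $X$ itself fails to be $k$-bounded sober (this is where Johnstone's example really enters, since Johnstone's dcpo by itself is already $k$-bounded sober, so genuinely new structure is needed to spoil $k$-bounded sobriety). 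Before the main argument I would check the routine facts that $X$ is $T_0$ and that the $A_n$ behave as claimed, using Lemma 2.7 and Propositions 2.1--2.4.

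Assume for contradiction that $(\overline X,\xi)$ is a $k$-bounded sobrification of $X$. Two structural facts will be used throughout. First, by Remark 3.2, $\xi\colon X\to\overline X$ is a topological embedding, and there is a continuous $g\colon\overline X\to X^S$ into the sobrification (sober, hence $k$-bounded sober) with $g\circ\xi=\dn_X$; thus $X$ sits inside $\overline X$ as a subspace, compatibly over $X^S$. Second, since $\overline X$ is $k$-bounded sober we have $\overline X=SI(\overline X)$ by Proposition 2.6, and likewise $S=SI(S)$ for every $k$-bounded sober $S$; hence by Proposition 2.4 \emph{every} continuous map from $\overline X$ into a $k$-bounded sober space preserves existing irreducible suprema. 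In particular, for a continuous $f\colon X\to S$ with $S$ $k$-bounded sober, the unique factorization $\bar f\colon\overline X\to S$ is forced to send any point of $\overline X$ that happens to be a supremum of some $\xi(F)$ ($F$ irreducible in $X$) to $\bigvee_S f(F)$; consequently such a supremum cannot exist in $\overline X$ at all whenever $\bigvee_S f(F)$ fails to exist in $S$.

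The argument then runs along the telescope. Since $\xi$ is an embedding, for each $n$ the set $\xi(A_n)$ is irreducible in $\overline X$ and bounded above there. I claim $\bigvee_{\overline X}\xi(A_n)$ can neither exist nor fail to exist. If it exists for some $n$, I feed in a $k$-bounded sober test space $S_n$ together with a continuous $f_n\colon X\to S_n$ for which $f_n(A_n)$ is bounded above but $cl_{S_n}(f_n(A_n))$ has no supremum (a Johnstone-type space with two incomparable minimal upper bounds, whose $k$-bounded sobriety one checks directly via Proposition 2.6); the rigidity observation above then contradicts the existence of $\bar f_n$. If, on the other hand, $\bigvee_{\overline X}\xi(A_n)$ fails for every $n$, then each $cl_{\overline X}(\xi(A_n))$ is irreducible closed in $\overline X$ with no supremum, and here the telescopic amalgamation is exploited: the union $A:=\bigcup_n\xi(A_n)$ is still irreducible in $\overline X$, and by the way the $(n{+}1)$-st copy was glued above the would-be supremum of $A_n$ one constructs a $k$-bounded sober test space $S_\infty$ and a continuous $f_\infty\colon X\to S_\infty$ with $\bigvee_{S_\infty}f_\infty(A)$ existing but forcing $\bigvee_{\overline X}A$ --- hence some $\bigvee_{\overline X}\xi(A_n)$ --- to exist after all, the desired contradiction. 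Either way, $X$ admits no $k$-bounded sobrification.

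I expect the principal obstacle to be exactly the interaction with the a priori unknown space $\overline X$: the universal property only yields information about continuous maps \emph{out} of $\overline X$, so converting ``$f_n(A_n)$ has (or has no) supremum in a test space'' into a statement about which suprema actually live in $\overline X$ requires the test spaces $S_n$ and $S_\infty$ to be designed so that, via Proposition 2.4 and the uniqueness clause of Definition 3.1, the values of the factorizations on the relevant putative suprema are completely pinned down; and one must retain, throughout, just enough control of the opaque topology of $\overline X$ to guarantee that each $\xi(A_n)$ and the limit set $A$ remain irreducible (and, where needed, closed) in $\overline X$. Checking that the amalgamated $X$ is a genuine $T_0$ space with the claimed order behaviour, and that the sobrification and products of $k$-bounded sober spaces (Proposition 2.8) may be freely used as auxiliary test spaces, is the remaining, more bookkeeping-heavy, ingredient.
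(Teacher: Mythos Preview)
Your Case~2 contains a genuine logical gap. The universal property of $(\overline X,\xi)$ only gives you continuous maps \emph{out of} $\overline X$, and by Proposition~2.4 such maps preserve existing irreducible suprema; they do not reflect them. So from ``$\bigvee_{S_\infty}\bar f_\infty(A)$ exists'' you cannot deduce that $\bigvee_{\overline X}A$ (or any $\bigvee_{\overline X}\xi(A_n)$) exists. There is simply no mechanism in your outline for forcing a supremum to appear in the opaque space $\overline X$. The telescopic gluing ``above the would-be supremum of $A_n$'' presupposes that the sobrification adds such suprema, but $k$-bounded sobrification (were it to exist) is not a completion procedure: it only has to ensure that irreducible closed sets \emph{whose supremum already exists} are point closures.

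The paper handles the analogous Case~2 by a completely different, and much simpler, idea that you are missing: when $\bigvee i(A)$ fails to exist in $\overline P$ but $i(\top)$ is an upper bound, there is some other upper bound $x_0$ of $i(A)$ with $i(\top)\nleq x_0$. One then builds a single small $k$-bounded sober test space $Y$ (Johnstone's dcpo with three extra points $\top_1,\top_2<\top_3$) and exploits $x_0$ to produce \emph{two distinct} continuous factorizations of the same $f\colon P\to Y$ through $\overline P$, contradicting uniqueness in Definition~3.1. No telescope is needed at all: the paper's $X$ is just Johnstone's dcpo with a single top $\top$ adjoined, and the Johnstone part $A$ is one irreducible closed set with $\bigvee A=\top$ witnessing that this space is not $k$-bounded sober. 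Your construction is substantially more elaborate than necessary, and the part of it meant to replace the uniqueness argument does not work.
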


\begin{proof} Let $P=\big(\mathbb{N}\times (\mathbb{N}\cup\{\infty\})\big)\cup\{\top\}$. Define a partial order on $P$ as follows: for $x, y\in P$, $x\leq y$ iff one of the following
conditions holds (see Figure 3):
\begin{enumerate}
\item[(1)] $y=\top$;

\item[(2)] $x=(m_{1},n_{1})$ and $y=(m_{2},n_{2})$, with $m_{1}=m_{2}$, $n_{1}\leq n_{2}\ (\leq \infty)$ or $n_{2}= \infty$, $n_{1}\leq
m_{2}$.
\end{enumerate}

\begin{center}
\centering
\includegraphics[totalheight=1.8in]{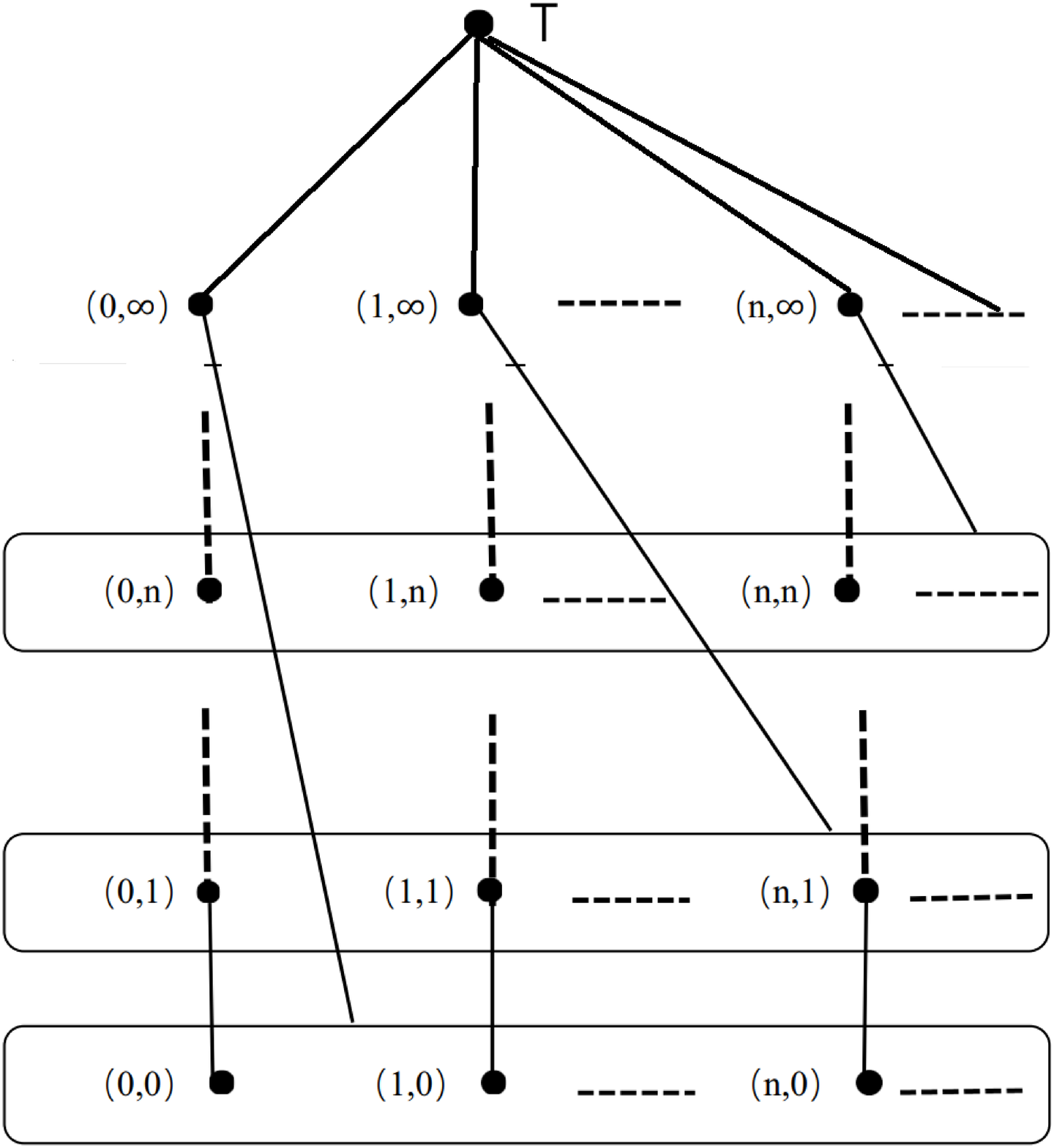}\\Figure 3
\end{center}

That is, we  add $\top$ above all elements in Johnstone's dcpo.   We will show below that the  $k$-bounded sobrification
of the $T_{0}$ space $(P, \sigma(P))$ does not exist.

 Assume for contradiction that the   $k$-bounded sobrification
of   $(P, \sigma(P))$  exists. Let it be $(\overline {P}, i)$ and let $A=\mathbb{N}\times (\mathbb{N}\cup\{\infty\})$.
Then $A$ is an irreducible Scott closed  subset of $P$ with $\bigvee A=\top$. Since $i$ is  continuous, $i(A)$ is an irreducible subset of $\overline {P}$. There are two cases:

\vspace*{0.2cm}
{\bf Case 1}. $\bigvee i(A)$ exists in $\overline {P}$.
\vspace*{0.2cm}

In this case, we introduce two new maximal elements $\top_1$ and $\top_2$, and let $X=\big(\mathbb{N}\times
(\mathbb{N}\cup\{\infty\})\big)\cup\{\top_{1}, \top_{2}\}$. Define a partial order $\leq$ on $X$ as follows (see Figure 4):

\begin{enumerate}
\item[(1)] for  $x\in \mathbb{N}\times (\mathbb{N}\cup\{\infty\})$, $x\leq\top_{1}, \top_{2}$;

\item[(2)] for $x, y\in \mathbb{N}\times (\mathbb{N}\cup\{\infty\})$, if
$x=(m_{1},n_{1})$, $y=(m_{2},n_{2})$, $x\leq y$ iff $m_{1}=m_{2}$, $n_{1}\leq n_{2}\ (\leq \infty)$ or $n_{2}= \infty$, $n_{1}\leq m_{2}$.
\end{enumerate}
\begin{center}
\centering
\includegraphics[totalheight=2.2in]{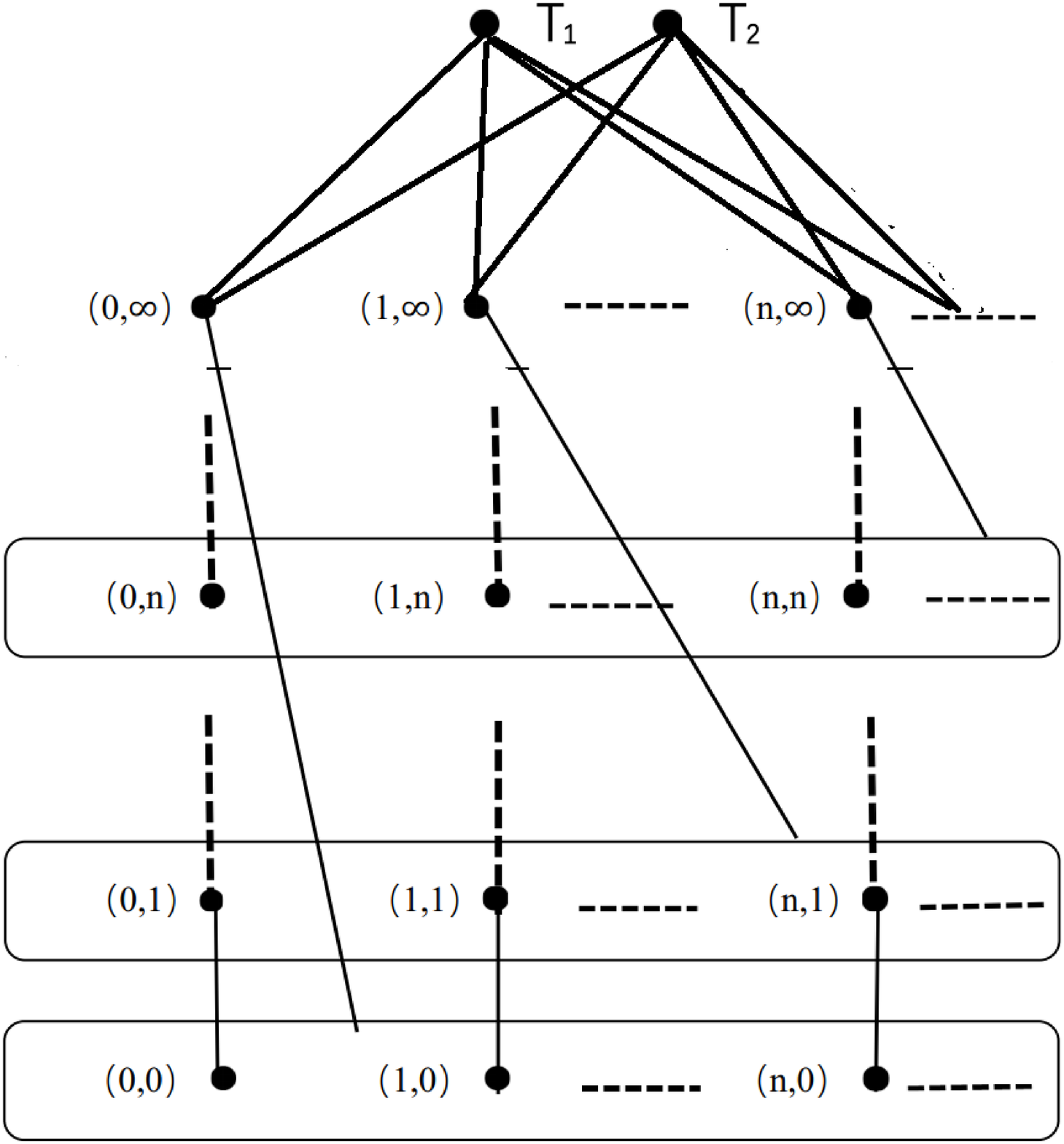}\\Figure 4
\end{center}
It is easy to check that $(X,\sigma(X))$ is a $k$-bounded sober space. Define a mapping $f: P\longrightarrow X$ as follows:
\begin{center}
$\forall \ a\in P,\  f(a)=\left\{\begin{array}{ll}
a,& \ \ \ \ \hbox{if}\ a\in A,\\
\top_{1}, &\ \ \ \ \hbox{if}\ a=\top.\end{array}\right. $
\end{center}
Then $f$ is  Scott continuous and $\bigvee f(A)$ does not exist in $X$. Since  $(\overline {P}, i)$ is assumed to be the $k$-bounded sobrification
of  $(P, \sigma(P))$, there exists a unique continuous mapping $\bar{f} : \overline {P}\longrightarrow X$ such that $\bar{f}\circ i=f$, i.e., the following
diagram commutes:\\
\newline
\begin{picture}(150,100)(50,50)
\put(275,81){\makebox(0,0){$X$}}
\put(275,135){\makebox(0,0){$\overline {P}$}}
\put(220,135){\vector(1,0){45.0}}
\put(215,135){\makebox(0,0){$P$}}
\put(250,142){\makebox(0,0){$i$}}
\put(230,100){\makebox(0,0){$f$}}
\put(283,110){\makebox(0,0){$\overline {f}$}}
\put(220,130){\vector(1,-1){45.0}}
\put(275,125){\vector(0,-1){35.0}}
\put(265,60){\makebox(0,0){Figure 5}}
\end{picture}\\
Since $\overline {P}$ and $(X,\sigma(X))$ are both $k$-bounded sober, by Proposition 2.4, we have that $$\bar{f}\Big(\bigvee i(A)\Big)=\bigvee \bar{f}(i(A))=\bigvee f(A).$$
This contradicts the fact that $\bigvee f(A)$ does not exist in $X$.

\vspace*{0.2cm}
{\bf Case 2}.   $\bigvee i(A)$ does not  exist in $\overline {P}$.
\vspace*{0.2cm}

Note that  $i(\top)$ is an upper bound of  $i(A)$. In this case, there is an upper bound  $x_0$ of $i(A)$ in $\overline {P}$ such that $i(\top)\nleqslant x_0$.  We introduce three new elements
$\top_{1}, \top_{2}, \top_{3}$ and
let $Y=\big(\mathbb{N}\times
(\mathbb{N}\cup\{\infty\})\big)\cup\{\top_{1}, \top_{2}, \top_{3}\}$. Define a partial order
$\leq$ on $Y$ as follows (see Figure 6):
\begin{enumerate}
\item[(1)] $\top_{1}\leq \top_{3}, \top_{2}\leq \top_{3}$;

\item[(2)] for   $x\in \mathbb{N}\times (\mathbb{N}\cup\{\infty\}), x\leq\top_{1}, \top_{2}$;

\item[(3)]  for $x, y\in \mathbb{N}\times (\mathbb{N}\cup\{\infty\})$, if
$x=(m_{1},n_{1})$, $y=(m_{2},n_{2})$, $x\leq y$ iff $m_{1}=m_{2}$, $n_{1}\leq n_{2}\ (\leq \infty)$ or $n_{2}= \infty$, $n_{1}\leq m_{2}$.
\end{enumerate}
\begin{center}
\centering
\includegraphics[totalheight=2.2in]{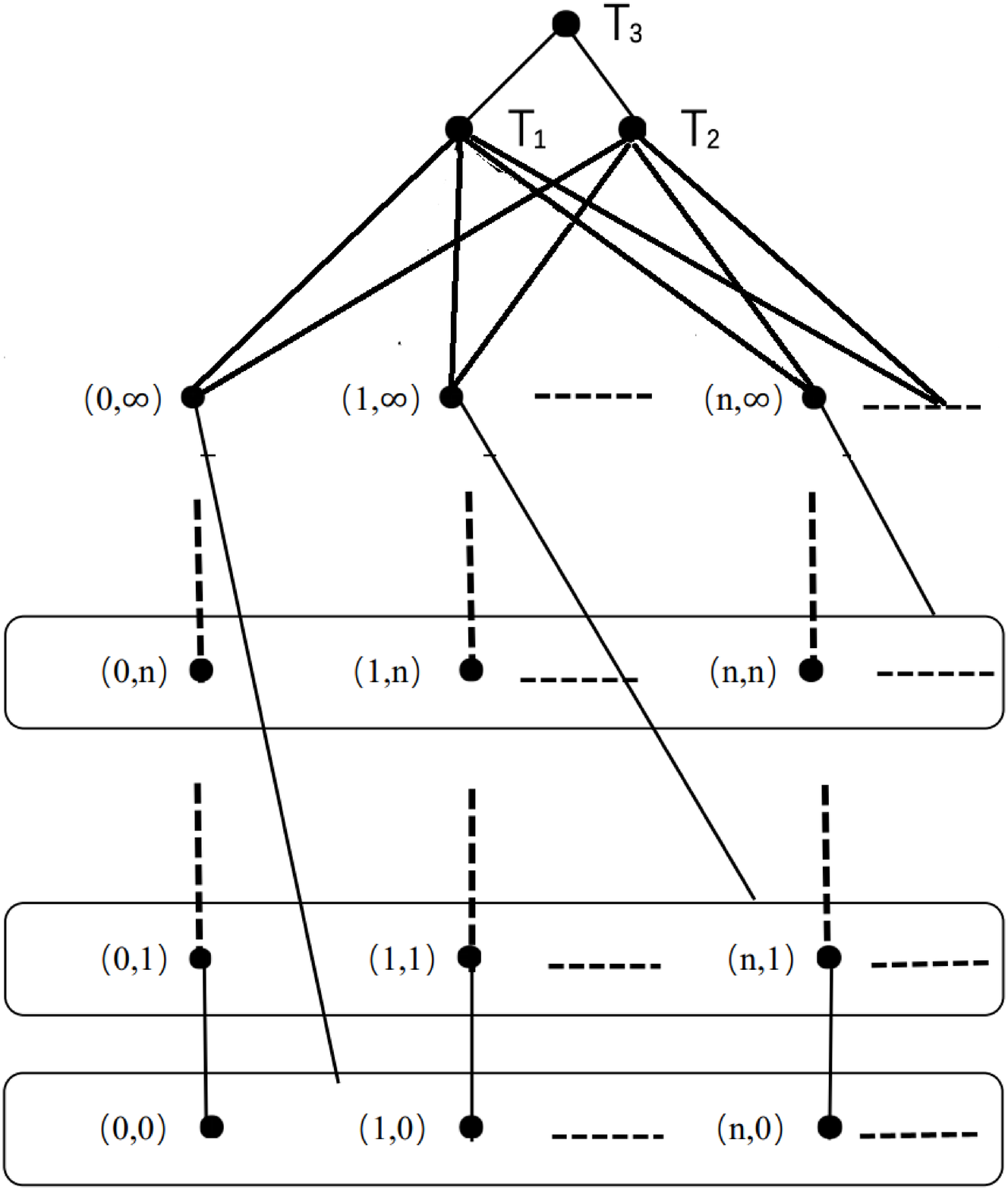}\\Figure 6
\end{center}
It is easy to check that $(Y,\sigma(Y))$ is a $k$-bounded sober space. Define a mapping $f: P\longrightarrow Y$ as follows:
\begin{center}
$\forall \ a\in P,\  f(a)=\left\{\begin{array}{ll}
a,& \ \ \ \ \hbox{if}\ a\in A,\\
\top_{3}, &\ \ \ \ \hbox{if}\ a=\top.\end{array}\right. $
\end{center}
Again, $f$ is  Scott continuous. Since  $(\overline {P}, i)$ is the  $k$-bounded sobrification
of  $(P, \sigma(P))$, there exists a unique continuous mapping $\bar{f} : \overline {P}\longrightarrow Y$ such that $\bar{f}\circ i=f$, i.e., the following
diagram commutes:\\
\newline
\begin{picture}(150,100)(50,50)
\put(275,81){\makebox(0,0){$Y$}}
\put(275,135){\makebox(0,0){$\overline {P}$}}
\put(220,135){\vector(1,0){45.0}}
\put(215,135){\makebox(0,0){$P$}}
\put(250,142){\makebox(0,0){$i$}}
\put(230,100){\makebox(0,0){$f$}}
\put(283,110){\makebox(0,0){$\overline {f}$}}
\put(220,130){\vector(1,-1){45.0}}
\put(275,125){\vector(0,-1){35.0}}
\put(265,60){\makebox(0,0){Figure 7}}
\end{picture}

\vspace*{0.2cm}
{\bf Subcase 1}. $\bar{f}^{-1}(\top_{1})\cup \bar{f}^{-1}(\top_{2})\neq \emptyset$.
\vspace*{0.2cm}

In this case, there is $x_1\in \overline {P}$ such that $\bar{f}(x_1)=\top_{1}$ or  $\bar{f}(x_1)=\top_{2}$. Define a mapping $g : Y\longrightarrow Y$ as follows:
\begin{center}
$ \forall \ a\in Y,\ g(a)=\left\{\begin{array}{ll}
a,& \ \ \ \ \hbox{if}\ a\in A,\\
\top_{3}, &\ \ \ \ \hbox{otherwise}.\end{array}\right. $
\end{center}

Then $g$ is  Scott continuous with $g\circ f=f$. Thus
$(g\circ \bar{f})\circ i=g\circ (\bar{f}\circ i)=g\circ f=f,$ i.e., the following
diagram commutes:\\
\newline
\begin{picture}(150,100)(50,50)
\put(275,81){\makebox(0,0){$Y$}}
\put(275,135){\makebox(0,0){$\overline {P}$}}
\put(220,135){\vector(1,0){45.0}}
\put(215,135){\makebox(0,0){$P$}}
\put(250,142){\makebox(0,0){$i$}}
\put(230,100){\makebox(0,0){$f$}}
\put(263,110){\makebox(0,0){$\overline {f}$}}
\put(295,110){\makebox(0,0){$g\circ\overline {f}$}}
\put(220,130){\vector(1,-1){45.0}}
\put(270,125){\vector(0,-1){35.0}}
\put(278,125){\vector(0,-1){35.0}}
\put(265,60){\makebox(0,0){Figure 8}}
\end{picture}
\newline By the uniqueness of $\bar{f}$, $g\circ \bar{f}=\bar{f}$, contradicting the fact that $(g\circ \bar{f})(x_1)=\top_{3}\neq \bar{f}(x_1)$.

\vspace*{0.2cm}
{\bf Subcase 2}.  $\bar{f}^{-1}(\top_{1})\cup \bar{f}^{-1}(\top_{2})=\emptyset$.
\vspace*{0.2cm}

Since $i(\top)\in \overline {P}\backslash \bar{f}^{-1}(A)$,\ $\overline {P}\backslash \bar{f}^{-1}(A)\neq \emptyset$.    In this case, for $x\in \overline {P}\backslash \bar{f}^{-1}(A)$, $\bar{f}(x)=\top_{3}$. Clearly, $\bar{f}(x_0)=\top_3$.
Define a mapping $\hat{f}: \overline {P}\longrightarrow Y$ as follows:
\begin{center}
$\forall \ x\in  \overline {P},\  \hat{f}(x)=\left\{\begin{array}{cl}
\bar{f}(x), & \ \ \ \ \hbox{if}\ x\in \bar{f}^{-1}(A);\\
\top_{1},& \ \ \ \ \hbox{if}\ x\in \bar{f}^{-1}(\up \top_3)\cap \dn x_0;\\
\top_{3},& \ \ \ \ \hbox{if}\ x\in \bar{f}^{-1}(\up \top_3)\cap (\overline {P}\setminus \dn x_0).
\end{array}\right. $
\end{center}
We claim that $\hat{f}$ is continuous. In fact, let $B$ be a Scott closed subset in $Y$. If $B\subseteq A$, then $\hat{f}^{-1}(B)=\bar{f}^{-1}(B)$. Since $\bar{f}$ is a continuous mapping, $\hat{f}^{-1}(B)$ is a closed subset in $\overline {P}$. If $B\nsubseteq A$, then $B=\dn \top_1$ or $B=\dn \top_2$ or $B=\dn \top_1\cup \dn \top_2$ or $B=Y$. Obviously, $A$ is a Scott closed subset in $Y$. Since  $\bar{f}^{-1}(\top_{1})\cup \bar{f}^{-1}(\top_{2})=\emptyset$, we have that $\bar{f}^{-1}(\up \top_3)\cup \bar{f}^{-1}(A)=\overline {P}$. Then
$$\hat{f}^{-1}(\dn \top_1)=(\bar{f}^{-1}(\up \top_3)\cap \dn x_0)\cup \bar{f}^{-1}(A)=\overline {P}\cap (\dn x_0\cup \bar{f}^{-1}(A))=   \dn x_0\cup \bar{f}^{-1}(A).$$
So $\hat{f}^{-1}(\dn \top_1)$ is a closed subset in $\overline {P}$. As $\hat{f}^{-1}(\dn \top_2)=\bar{f}^{-1}(A)$, we conclude that $\hat{f}^{-1}(\dn \top_2)$ is a closed subset in $\overline {P}$. Then $\hat{f}^{-1}(\dn\top_1\cup \dn \top_2)$ and $\hat{f}^{-1}(Y)$ are closed subsets in $\overline {P}$. Thus $\hat{f}$ is a continuous mapping.
Furthermore, $\hat{f}\circ i=f$, i.e., the following
diagram commutes:\\
\newline
\begin{picture}(150,100)(50,50)
\put(275,81){\makebox(0,0){$Y$}}
\put(275,135){\makebox(0,0){$\overline {P}$}}
\put(220,135){\vector(1,0){45.0}}
\put(215,135){\makebox(0,0){$P$}}
\put(250,142){\makebox(0,0){$i$}}
\put(230,100){\makebox(0,0){$f$}}
\put(263,110){\makebox(0,0){$\overline {f}$}}
\put(285,110){\makebox(0,0){$\hat{f}$}}
 \put(220,130){\vector(1,-1){45.0}}
\put(270,125){\vector(0,-1){35.0}}
\put(278,125){\vector(0,-1){35.0}}
\put(265,60){\makebox(0,0){Figure 9}}
\end{picture}
\newline By the uniqueness of $\bar{f}$,  $\hat{f}=\bar{f}$, contradicting the fact that $\hat{f}(x_0)=\top_1$ and $\bar{f}(x_0)=\top_3$.

This shows that the $T_0$ space $(P, \sigma(P))$ can not have  $k$-bounded sobrification.\end{proof}

By Theorem \ref{nonsobrification}, we have the following theorem immediately.

\begin{theorem}  The category {\bf KBSob} is not a reflective subcategory of {\bf
Top$_{0}$}.
\end{theorem}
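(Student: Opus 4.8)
The plan is to read this off from Theorem~\ref{nonsobrification} together with the standard characterization of reflective subcategories. First I would record that {\bf KBSob} is a \emph{full} subcategory of {\bf Top$_0$}: by Definition~2.3 every $k$-bounded sober space is in particular a $T_0$ space, and both categories carry all continuous mappings as morphisms. By the usual criterion (see \cite{JA90}), {\bf KBSob} is reflective in {\bf Top$_0$} if and only if every object $X$ of {\bf Top$_0$} admits a reflection arrow, i.e.\ a $k$-bounded sober space $\overline X$ together with a continuous mapping $\eta_X:X\longrightarrow\overline X$ such that every continuous mapping from $X$ to a $k$-bounded sober space factors uniquely through $\eta_X$.

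Next I would observe that this reflection-arrow condition is literally Definition~3.1: a reflection of $X$ along the inclusion {\bf KBSob}$\hookrightarrow${\bf Top$_0$} is exactly a $k$-bounded sobrification of $X$ in the sense of Definition~3.1. Consequently, were {\bf KBSob} reflective in {\bf Top$_0$}, then in particular the $T_0$ space $(P,\sigma(P))$ constructed in the proof of Theorem~\ref{nonsobrification} would possess a $k$-bounded sobrification, contradicting that theorem. Hence {\bf KBSob} is not a reflective subcategory of {\bf Top$_0$}, which is the assertion.

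There is essentially no obstacle to overcome here: all of the difficulty has already been absorbed into Theorem~\ref{nonsobrification}, and the present statement is a purely formal consequence. The only points deserving a line of justification are the equivalence between reflectivity and the pointwise existence of reflection arrows, and the verification that the categorical universal property of a reflection coincides with the one spelled out in Definition~3.1; both are immediate, and one could, if desired, also recall from Remark~3.2 that the reflection unit $\eta_X$ is then automatically an embedding.
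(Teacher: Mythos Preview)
Your proposal is correct and follows the same approach as the paper, which simply states that the theorem is immediate from Theorem~\ref{nonsobrification}. You have merely made explicit the identification between a reflection arrow in the sense of \cite{JA90} and a $k$-bounded sobrification in the sense of Definition~3.1, which is exactly the implicit reasoning behind the paper's one-line proof.
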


Let $KB(X)$ denote the set of all irreducible closed sets of
a $T_{0}$ space $X$ whose suprema exist. We write $K_{F}=\{A\in
KB(X)\mid A\subseteq F\}$ for any closed set $F$ of $X$. It is then
straightforward to verify that $\{K_{F}\mid F\in\Gamma(X)\}$ actually
defines a co-topology on $KB(X)$. Here, $\Gamma(X)$ is the family of
all closed sets of $X$. We will use $KB(X)$ to denote the
corresponding topological space. Zhao and Ho proved in \cite{DZ15} that
$KB(X)$ is a $k$-bounded sober space.

\begin{proposition}  Let $f$ be a continuous mapping from a $T_{0}$ space $X$  to a k-bounded sober space $Y$. If $f$  preserves existing irreducible suprema, then there exists a
unique continuous mapping $\bar{f} : KB(X)\longrightarrow
Y$ such that $\bar{f}\circ \xi=f$, where the mapping $\xi : X\longrightarrow KB(X)$ is defined by $\xi(x)=\dn x$.
\end{proposition}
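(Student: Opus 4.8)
The plan is to construct $\bar f$ explicitly on irreducible closed sets and show it is the unique continuous extension of $f$ along $\xi$. For $A \in KB(X)$, since $A$ is an irreducible closed subset of $X$ whose supremum $\bigvee A$ exists in $X$, and $f$ preserves existing irreducible suprema, I would set $\bar f(A) := f(\bigvee A) = \bigvee f(A)$. First I would check the factorization $\bar f \circ \xi = f$: for $x \in X$ we have $\xi(x) = \dn x = cl(\{x\})$, which is irreducible and closed, and $\bigvee \dn x = x$, so $\bar f(\xi(x)) = f(x)$; note $\dn x \in KB(X)$ precisely because its supremum $x$ exists, so $\xi$ indeed lands in $KB(X)$.

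Next I would verify continuity of $\bar f$. The closed sets of $KB(X)$ are exactly the sets $K_F = \{A \in KB(X) \mid A \subseteq F\}$ for $F \in \Gamma(X)$, so it suffices to show $\bar f^{-1}(C)$ is of this form for every closed $C \subseteq Y$. Given closed $C \subseteq Y$, I claim $\bar f^{-1}(C) = K_{f^{-1}(C)}$. For $A \in KB(X)$: if $A \subseteq f^{-1}(C)$, then $f(A) \subseteq C$, so $cl(f(A)) \subseteq C$ (as $C$ is closed), and by Proposition 2.1(4) $\bigvee f(A) = \bigvee cl(f(A)) \in cl(f(A)) \subseteq C$ — here I use that in a $T_0$ space the supremum of a set, when it exists, lies in the closure of that set, which follows from Proposition 2.1(1),(3) since $\bigvee cl(f(A))$ is the top of the closed set $cl(f(A)) = \dn(\bigvee f(A))$. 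Hence $\bar f(A) \in C$. Conversely, if $\bar f(A) = \bigvee f(A) \in C$, then since $C$ is closed (a down-set, and $SI$-considerations aside, just a lower set under specialization by Proposition 2.1(3)) and every element of $f(A)$ lies below $\bigvee f(A)$, we get $f(A) \subseteq \dn(\bigvee f(A)) \subseteq \dn C = C$, so $A \subseteq f^{-1}(C)$, i.e. $A \in K_{f^{-1}(C)}$. Thus $\bar f^{-1}(C) = K_{f^{-1}(C)}$ is closed, proving $\bar f$ continuous.

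Finally, for uniqueness, suppose $g : KB(X) \longrightarrow Y$ is continuous with $g \circ \xi = f$. Since $Y$ is $k$-bounded sober and $KB(X)$ is $k$-bounded sober (as recalled from \cite{DZ15}), continuous maps between them automatically preserve existing irreducible suprema — this is where I would invoke Proposition 2.4, applied to $SI(KB(X)) = KB(X)$ and $SI(Y) = Y$ via Proposition 2.6. Now fix $A \in KB(X)$. The set $\xi(A) = \{\dn x \mid x \in A\}$ is an irreducible subset of $KB(X)$ whose supremum is $A$ itself: indeed $A = cl_X(A)$ is the union of the $\dn x$ for $x\in A$, and one checks $A$ is the least element of $KB(X)$ containing all $\dn x$, using that $KB(X)$'s order is containment and $A \in KB(X)$ because $\bigvee A$ exists. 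Then $g(A) = g(\bigvee \xi(A)) = \bigvee g(\xi(A)) = \bigvee f(A) = \bar f(A)$. The main obstacle I anticipate is the bookkeeping in this last step — precisely identifying $\bigvee \xi(A)$ inside the space $KB(X)$ and confirming $\xi(A)$ is irreducible there — together with making sure the appeal to Proposition 2.4 is legitimate, i.e. that $k$-bounded sobriety of both spaces really does force the relevant map to preserve irreducible suprema; the continuity argument and the factorization identity are routine by comparison.
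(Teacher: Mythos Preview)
Your outline matches the paper's proof: define $\bar f(A)=f(\bigvee A)$, check the factorization, show $\bar f^{-1}(C)=K_{f^{-1}(C)}$, and for uniqueness use Propositions 2.4 and 2.6 together with $\bigvee_{x\in A}\dn x = A$ in $KB(X)$.

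There is, however, a genuine gap in your continuity argument. In the forward direction you assert that ``in a $T_0$ space the supremum of a set, when it exists, lies in the closure of that set,'' and you derive $cl(f(A))=\dn(\bigvee f(A))$ from Proposition~2.1(1),(3). This is false in general: take the very space $(P,\sigma(P))$ of Theorem~\ref{nonsobrification}, where $A=\mathbb{N}\times(\mathbb{N}\cup\{\infty\})$ is closed with $\bigvee A=\top\notin A=cl(A)$. Proposition~2.1 only tells you that closed sets are lower sets, not that they are closed under existing irreducible suprema. What actually makes the step $\bigvee f(A)\in C \Rightarrow$ (and, more to the point, $f(A)\subseteq C \Rightarrow \bigvee f(A)\in C$) go through is precisely the $k$-bounded sobriety of $Y$: by Proposition~2.6 every closed subset of $Y$ is $SI$-closed, so an irreducible subset contained in $C$ has its supremum in $C$. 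Equivalently, $cl(f(A))$ is an irreducible closed set whose supremum exists, hence by $k$-bounded sobriety $cl(f(A))=\dn(\bigvee f(A))$. The paper's chain of equivalences uses exactly this; once you invoke $k$-bounded sobriety of $Y$ at that point, your argument is complete and coincides with the paper's.
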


\begin{proof}For all $F\in KB(X)$, we have that  $f(\bigvee F)=\bigvee f(F)$. Define a mapping $\bar{f} : KB(X)\longrightarrow
Y$ by
$  \bar{f}(F)=f(\bigvee F)$ for $F\in KB(X)$.
Clearly, $\bar{f}\circ \xi=f$. Let $B$ be a closed subset of $Y$. Since $Y$ is a  $k$-bounded sober space,
\[A\in \bar{f}^{-1}(B)\Longleftrightarrow \bar{f}(A)\in B\Longleftrightarrow f(\bigvee A)\in B \Longleftrightarrow \bigvee f(A)\in B\]
\[\Longleftrightarrow f(A)\subseteq B \Longleftrightarrow A\subseteq f^{-1}(B) \Longleftrightarrow A\in K_{f^{-1}(B)}.\]
Thus $\bar{f}^{-1}(B)$ is a closed subset of $KB(X)$, and   $\bar{f}$  is continuous. Let $g : KB(X)\longrightarrow
Y$ be a continuous mapping such that $g\circ \xi=f$.
By Propositions 2.4 and 2.6, we have that $g(\bigvee \xi(F))=\bigvee g(\xi(F))$ for all $F\in KB(X)$, and hence
$$g(F)=g\Big(\bigvee\limits_{x\in F}\dn x\Big)=g(\bigvee \xi(F))=\bigvee g(\xi(F))=\bigvee f(F)=f(\bigvee F)=\bar{f}(F),$$
and $g=\bar{f}$. The proof is completed.\end{proof}

\begin{example} Let $P=\mathbb{N}\cup\{\infty\}$, where $\mathbb{N}$ denotes the set of natural numbers, and let the partial order $\leq$
 on $X$ be the standard order:
 $$0\leq 1\leq\cdot\cdot\cdot\leq n\leq\cdot\cdot\cdot\leq\infty.$$
Obviously,  the Alexandroff topological space $(P,\Upsilon(P))$ is a $T_{0}$ space, but not a  $k$-bounded sober spaces.

Let $\mathbb{N}^{S}=\{A\subseteq \mathbb{N}\mid  A$ \mbox{ is a directed lower set}\}. We consider the topology
 $\mathcal{T}=\{U^{S}\mid U\subseteq\mathbb{N}$ \mbox{ is an upper set}\} on $\mathbb{N}^{S}$, where $U^{S}=\{A\in \mathbb{N}^{S}\mid A\cap U\neq\emptyset\}$.
 Then $(\mathbb{N}^{S},\mathcal{T})$ is the sobrification of the Alexandroff topological space $(\mathbb{N},\Upsilon(\mathbb{N}))$, and hence $k$-bounded sober.
 Define a mapping $\xi: P\longrightarrow \mathbb{N}^{S}$ as follows:
\begin{center}
$\forall \ x\in P,\ \xi(x)=\left\{\begin{array}{ll}
 \dn x, \ &\ \ \ \ \hbox{if}\ x\in \mathbb{N},\\
 \mathbb{N},  & \ \ \ \ \hbox{if}\ x=\infty.\end{array}\right. $
\end{center}
Then $\xi$ is a continuous mapping  preserving existing irreducible suprema.
Since $\xi(\{\infty\})=\{\mathbb{N}\}$ is not an open subset in the subspace $\xi(P)$, we have that $\xi$
is not an embedding.

Let $Y$ be a k-bounded sober space and $f: P\longrightarrow Y$  a continuous mapping preserving
existing irreducible suprema.  Then there exists a
unique continuous mapping $\bar{f} : \mathbb{N}^{S}\longrightarrow
Y$ such that $\bar{f}\circ \xi=f$, i.e., the following
diagram commutes:\\
\newline
\begin{picture}(150,100)(50,50)
\put(275,81){\makebox(0,0){$Y$}}
\put(275,135){\makebox(0,0){$\mathbb{N}^{S}$}}
\put(220,135){\vector(1,0){45.0}}
\put(215,135){\makebox(0,0){$P$}}
\put(250,142){\makebox(0,0){$\xi$}}
\put(230,100){\makebox(0,0){$f$}}
\put(283,110){\makebox(0,0){$\overline {f}$}}
 \put(220,130){\vector(1,-1){45.0}}
\put(275,125){\vector(0,-1){35.0}}
\put(265,60){\makebox(0,0){Figure 10}}
\end{picture}\\
We define a mapping $\bar{f}: \mathbb{N}^{S}\longrightarrow Y$ as follows:
 $$\forall \ A\in \mathbb{N}^{S},\ \bar{f}(A)=\bigvee f(A). $$
 Obviously, $\bar{f}$ is well-defined. Let $V$ be an open subset of $Y$.  Then $f^{-1}(V)$ is an open subset of $P$.
 Whenever $f^{-1}(V)=\emptyset$,  we will prove that
 $\bar{f}^{-1}(V)=\emptyset$.
 Assume that
 $\bar{f}^{-1}(V)\neq\emptyset$. Then there exists $A\in \mathbb{N}^{S}$ such that $A\in\bar{f}^{-1}(V)$, and thus
 $\bar{f}(A)=\bigvee f(A)\in V$. Since $\bigvee A$ exists in $P$, we have that $f(\bigvee A)=\bigvee f(A)\in V$. This means that $\bigvee A\in f^{-1}(V)$,  contradicting the fact that $f^{-1}(V)=\emptyset$. So
 we conclude that $\bar{f}^{-1}(V)=\emptyset$.
 Whenever $f^{-1}(V)\neq\emptyset$, then there exists $a\in P$ such that $f(a)\in V$, and thus $$f(a)\leq f(\infty)=f(\bigvee\mathbb{N})=\bigvee f(\mathbb{N})\in V.$$
 Since $Y$ is a k-bounded sober space, we have that  $\bigvee f(\mathbb{N})\in cl(f(\mathbb{N}))$. Then $f(\mathbb{N})\cap V\neq \emptyset$, and thus
 $f^{-1}(V)\cap \mathbb{N}\neq\emptyset$.
 Since $$A\in\bar{f}^{-1}(V)\Longleftrightarrow \bigvee f(A)\in V\Longleftrightarrow f(A)\cap V\neq\emptyset\Longleftrightarrow A\cap f^{-1}(V)\cap\mathbb{N}\neq\emptyset,$$
 $\bar{f}^{-1}(V)=(f^{-1}(V)\cap\mathbb{N})^{S}$. This shows that  $\bar{f}$ is continuous.
 By $\bar{f}(\xi(x))=f(x)$ for any $x\in P$, we have that $\bar{f}\circ \xi=f$,
\newline Suppose that there exists a continuous mapping $g: \mathbb{N}^{S}\longrightarrow Y$ such that $g\circ \xi=f$.
 Then $g(\dn x)=f(x)$ for $x\in \mathbb{N}$. Let $A\in \mathbb{N}^{S}$. Then $$g(A)=g\Big(\bigvee\limits_{a\in A}\dn a\Big)=\bigvee\limits_{a\in A}g(\dn a)=\bigvee\limits_{a\in A}f(a)=\bar{f},$$ and thus $g=\bar{f}$.
\end{example}

\begin{remark} (1) By the  example above,  if the  mappings $\xi$ and $f$ in Definition 3.1 preserve  existing irreducible suprema, then $\xi$ may not be an embedding mapping. Note that in Remark 3.2, the corresponding mappings are just continuous.

(2) If we add the condition that the mappings $\xi$ and $f$ in Definition 3.1 preserve  existing irreducible suprema,
then it follows from Example 3.6 that the Alexandroff topological space $(P,\Upsilon(P))$  has a  $k$-bounded sobrification in this sense.
\end{remark}

Let {\bf Top$_k$} denote the category of all $T_{0}$ spaces and continuous mappings preserving existing irreducible suprema.
By Propositions 2.4 and 2.6, we have that {\bf KBSob} is a full subcategory of {\bf Top$_k$}.

\begin{proposition}
The category {\bf KBSob}
is  not a reflective subcategory of {\bf Top$_k$}.
\end{proposition}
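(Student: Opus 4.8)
The plan is to reuse the $T_0$ space $(P,\sigma(P))$ constructed in the proof of Theorem~\ref{nonsobrification}, together with the test spaces $(X,\sigma(X))$ and $(Y,\sigma(Y))$ appearing there, but now reading all the maps as morphisms in $\mathbf{Top}_k$, i.e.\ as continuous maps preserving existing irreducible suprema. First I would observe that the reflection of $\mathbf{KBSob}$ inside $\mathbf{Top}_k$, if it existed, would be a $k$-bounded sober space $\overline{P}$ together with a $\mathbf{Top}_k$-morphism $i\colon P\to\overline{P}$ enjoying the universal property of Definition~3.1 with respect to all $\mathbf{Top}_k$-morphisms into $k$-bounded sober spaces. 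Assuming such $(\overline P, i)$ exists, set $A=\mathbb N\times(\mathbb N\cup\{\infty\})$; since $A$ is irreducible Scott-closed in $P$ with $\bigvee A=\top$, and $i$ preserves this supremum, $i(\top)=\bigvee i(A)$ exists in $\overline P$. Thus the dichotomy Case~1 / Case~2 of Theorem~\ref{nonsobrification} collapses: only the analogue of Case~1 can occur, namely $\bigvee i(A)$ exists in $\overline P$.

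Next I would check that the inclusion $\dn_X\colon P\hookrightarrow$ any of these test spaces, and the maps $f\colon P\to X$ and $g\colon Y\to Y$ used before, are in fact $\mathbf{Top}_k$-morphisms. The key point is that in each relevant space the only nontrivial irreducible subset with a supremum is essentially $A$ (or its image), because the ``Johnstone part'' $\mathbb N\times(\mathbb N\cup\{\infty\})$ has no upper bound other than the top element(s) one adjoins, and adding finitely many maximal elements above it means that $A$ itself has a supremum only when a single top element dominates it. For the map $f\colon P\to X$ sending $A$ identically and $\top\mapsto\top_1$: the irreducible set $A\subseteq P$ has $\bigvee A=\top$ with $f(\top)=\top_1$, while $\bigvee f(A)=\bigvee A$ does \emph{not} exist in $X$ (it has two incomparable upper bounds $\top_1,\top_2$), so the condition ``$f(\bigvee F)=\bigvee f(F)$ whenever $\bigvee F$ exists'' is about preservation only of suprema that survive in the target — but here one must be careful: $\bigvee F$ exists in the source, so $f$ fails to preserve it unless $\bigvee f(A)$ exists. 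Hence $f$ as defined is \emph{not} a $\mathbf{Top}_k$-morphism, and I would instead route the contradiction through the construction of Subcases~1 and~2 of Case~2 in Theorem~\ref{nonsobrification}, applied to the space $Y$ (with three adjoined elements $\top_1,\top_2,\top_3$ and $f\colon P\to Y$, $\top\mapsto\top_3$), which \emph{does} preserve the irreducible supremum $\bigvee A=\top\mapsto\top_3=\bigvee f(A)$.

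With that in place, the argument runs exactly as in Case~2 of Theorem~\ref{nonsobrification}: from $\bigvee i(A)$ existing in $\overline P$ and $f$ a $\mathbf{Top}_k$-morphism into the $k$-bounded sober space $Y$, one gets a unique $\overline f\colon\overline P\to Y$ in $\mathbf{Top}_k$ with $\overline f\circ i=f$, and then one plays off the two self-maps $g$ and $\hat f$ of $Y$ (respectively $\overline P$) against the uniqueness clause. One must re-verify that $g\colon Y\to Y$ (the retraction collapsing $\top_1,\top_2,\top_3$ to $\top_3$) and the map $\hat f$ built in Subcase~2 are $\mathbf{Top}_k$-morphisms; this is routine since $g$ is Scott continuous and the only irreducible set in $Y$ with an existing supremum is $A$, on which $g$ is the identity, and similarly $\hat f$ agrees with $\overline f$ on $\overline f^{-1}(A)$, which carries the only relevant irreducible supremum. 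The contradictions $(g\circ\overline f)(x_1)=\top_3\neq\overline f(x_1)$ in Subcase~1 and $\hat f(x_0)=\top_1\neq\top_3=\overline f(x_0)$ in Subcase~2 then close the argument, showing $(P,\sigma(P))$ has no reflection in $\mathbf{KBSob}$ along $\mathbf{Top}_k$-morphisms either.

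The main obstacle is the bookkeeping of which maps genuinely preserve existing irreducible suprema: unlike in Theorem~\ref{nonsobrification}, where mere continuity sufficed, here every auxiliary map ($i$, $f$, $g$, $\hat f$, $\overline f$) must lie in $\mathbf{Top}_k$, so one has to pin down, in each of $P$, $X$, $Y$ and $\overline P$, the complete list of irreducible subsets admitting a supremum and check preservation on that list. Once one shows this list is, up to the action of the maps, just $\{A\}$ (and the singletons, which are preserved trivially), the rest is a transcription of the earlier proof.
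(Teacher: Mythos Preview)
Your plan has a genuine gap at the step where you claim that the map $f\colon P\to Y$ (sending $\top\mapsto\top_3$) lies in $\mathbf{Top}_k$. You write that it ``does preserve the irreducible supremum $\bigvee A=\top\mapsto\top_3=\bigvee f(A)$'', but in $Y$ the set $f(A)=A$ has the three upper bounds $\top_1,\top_2,\top_3$ with $\top_1,\top_2$ incomparable and both below $\top_3$; hence $A$ has \emph{no} least upper bound in $Y$. So $f\colon (P,\sigma(P))\to Y$ fails to preserve the existing irreducible supremum $\bigvee A$, for exactly the same reason that $f\colon (P,\sigma(P))\to X$ did. There is no Scott continuous map out of $(P,\sigma(P))$ into any of the test spaces $X,Y$ that is a $\mathbf{Top}_k$-morphism and still separates the behaviour at $\top$, so the strategy of running Theorem~\ref{nonsobrification} with $(P,\sigma(P))$ as the source cannot get off the ground. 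A second, related problem: you correctly observe that $i$ being a $\mathbf{Top}_k$-morphism forces $\bigvee i(A)=i(\top)$, collapsing the dichotomy to Case~1 --- but then you invoke the Subcase~1/Subcase~2 machinery of Case~2, whose Subcase~2 explicitly requires a point $x_0$ above $i(A)$ with $i(\top)\nleq x_0$. No such $x_0$ exists once $\bigvee i(A)=i(\top)$.

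The paper sidesteps both issues by changing the source object: it works not with $(P,\sigma(P))$ but with the Alexandroff space $(P,\Upsilon(P))$. In that topology the irreducible sets are precisely the directed sets, so $A$ is \emph{not} irreducible, and any Scott continuous map (in particular the maps $f$ into $X$ and $Y$) automatically preserves all existing irreducible suprema --- this is exactly Proposition~2.4 applied with $SI(P,\Upsilon(P))=(P,\sigma(P))$. Thus the test maps become $\mathbf{Top}_k$-morphisms for free. At the same time, the reflection map $\eta\colon(P,\Upsilon(P))\to\overline P$ is no longer obliged to send $\top$ to $\bigvee\eta(A)$ (since $A$ is not Alexandroff-irreducible), so both Case~1 and Case~2 are live again, and one shows $\eta$ is also continuous from $(P,\sigma(P))$ (via Propositions~2.4 and~2.6) in order to conclude that $\eta(A)$ is irreducible in $\overline P$. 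After this switch of source space, the contradiction is obtained exactly as in Theorem~\ref{nonsobrification}.
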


\begin{proof} Assume that  {\bf KBSob} is a reflective subcategory of {\bf Top$_k$}. Let $P$ be the poset as in Theorem 3.3. We consider the Alexandroff topological space $(P,\Upsilon(P))$. Then there exist a $k$-bounded sober space $\overline {P}$ and a continuous mapping preserving existing irreducible suprema
$\eta: (P,\Upsilon(P))\longrightarrow \overline {P}$ which enjoy the universal property: for any continuous mapping preserving existing irreducible suprema  $h$ from $(P,\Upsilon(P))$ to a
$k$-bounded sober space $Z$, there exists a unique continuous mapping $\overline{h}: \overline {P}\longrightarrow Z$ such that $\overline{h}\circ\eta=h$.
By Propositions 2.4 and 2.6,
$\eta: (P,\sigma(P))\longrightarrow \overline {P}$ is a continuous mapping.
Let $A=\mathbb{N}\times (\mathbb{N}\cup\{\infty\})$. Then $A$ is an irreducible Scott closed  subset of $P$ with $\bigvee A=\top$, and thus  $\eta(A)$ is an irreducible subset of $\overline {P}$. There are two cases:

\vspace*{0.2cm}
{\bf Case 1}. $\bigvee \eta(A)$ exists in $\overline {P}$.
\vspace*{0.2cm}

Let $X$ be the poset as in Case 1 of the proof of Theorem 3.3. Then $(X, \sigma(X))$ is a $k$-bounded sober space. Let $f : P\longrightarrow X$ be the mapping as in Case 1 of the proof of Theorem 3.3. Then $f$ is Scott continuous and $\bigvee f(A)$ does not exist in $X$.  Since $\sigma(P)\subseteq \Upsilon(P)$, $f$ is a continuous mapping from $(P,\Upsilon(P))$ to the
$k$-bounded sober space $(X, \sigma(X))$. By Proposition 2.4, we have that $f(\bigvee F)=\bigvee f(F)$ holds for each irreducible subset  $F$ in $(P,\Upsilon(P))$ whenever $\bigvee F$ exists.  By the universal property, there exists a unique continuous mapping $\overline{f}: \overline {P}\longrightarrow (X, \sigma(X))$ such that $\overline{f}\circ\eta=f$. So $$\bar{f}\Big(\bigvee \eta(A)\Big)=\bigvee \bar{f}(\eta(A))=\bigvee f(A).$$ This contradicts the fact that $\bigvee f(A)$ does not exist in $X$.

\vspace*{0.2cm}
{\bf Case 2}. $\bigvee \eta(A)$ does not exist in $\overline {P}$.
\vspace*{0.2cm}

Let $Y$ be the poset as in Case 2 of the proof of Theorem 3.3. Then $(Y, \sigma(Y))$ is a $k$-bounded sober space. Let $f : P\longrightarrow Y$ be the mapping as in Case 2 of the proof of Theorem 3.3. Then $f$ is Scott continuous.  Since $\sigma(P)\subseteq \Upsilon(P)$,  $f$ is a continuous mapping from $(P,\Upsilon(P))$ to $(Y, \sigma(Y))$. By Proposition 2.4, we have that $f(\bigvee F)=\bigvee f(F)$ holds for each irreducible subset  $F$ in $(P,\Upsilon(P))$ whenever $\bigvee F$ exists. By the universal property, there exists a unique continuous mapping $\overline{f}: \overline {P}\longrightarrow (Y, \sigma(Y))$ such that $\overline{f}\circ\eta=f$. The following proof is similar to  Case 2 of the proof of Theorem 3.3, we always obtain a contradiction.

 Thus {\bf KBSob} is not a reflective subcategory of {\bf Top$_k$}.
\end{proof}

\section{$QK$-bounded sober spaces}

Let $X$ be a $T_{0}$ space. Define a relation $\sim$ on $KB(X)$ as follows:
$$\mbox{For all\ } F,G\in KB(X), F\sim G  \mbox{ if and only if\ } \bigvee F=\bigvee G.$$
Then $\sim$ is an equivalence relation on $KB(X)$. Let $A\in KB(X)$, we write
$$[A]=\{B\in KB(X)\mid A\sim B\},$$
that is, $[A]$ is the equivalence class of $A$. Let $KB(X)/\!\sim$ denote the set of all equivalence class of $\sim$, that is, $KB(X)/\!\sim=\{[A]\mid A\in KB(X)\}$.
Define a mapping $q: KB(X)\longrightarrow KB(X)/\!\sim$ as follows:
$$\forall\ A\in KB(X),\ q(A)=[A].$$
Then $\tau_{X}=\{\mathcal{U}\subseteq KB(X)/\!\sim\mid q^{-1}(\mathcal{U})$ is open in $KB(X)\}$ is the quotient topology on $KB(X)/\!\sim$. We shall use
$KB(X)/\!\sim$ to denote the corresponding topological space.

\begin{proposition}  Let $X$ be a  $T_{0}$ space, $\mathcal{B}\subseteq KB(X)/\!\sim$. Then $\mathcal{B}$ is closed in the quotient space $KB(X)/\!\sim$ if and only if
there exists an $SI$-closed subset $B$ of $X$ such that $\mathcal{B}=\{[\dn x]\mid x\in B\}$.
\end{proposition}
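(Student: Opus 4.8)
The plan is to characterise the closed sets of the quotient space $KB(X)/\!\sim$ directly from the definition of the quotient topology, using the already-established description of closed sets in $KB(X)$ (the co-topology $\{K_F\mid F\in\Gamma(X)\}$) together with the fact (Proposition 2.2) that $SI$-closed sets are precisely those closed sets $C$ with $\bigvee F\in C$ for every irreducible $F\subseteq C$ whose sup exists. The key bookkeeping observation is that for $A\in KB(X)$ we have $A\sim{\dn}(\bigvee A)$ by Proposition 2.1(4) (since $\bigvee A=\bigvee cl(A)=\bigvee A$ and $cl(\{\bigvee A\})={\dn}(\bigvee A)$), so every equivalence class $[A]$ contains a canonical representative of the form ${\dn}x$ with $x=\bigvee A$; moreover $[{\dn}x]=[{\dn}y]$ iff $x=y$ because in a $T_0$ space distinct points have distinct closures. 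Hence the map $x\mapsto[{\dn}x]$ is a bijection from $X$ onto $KB(X)/\!\sim$, and a subset of $KB(X)/\!\sim$ has the form $\{[{\dn}x]\mid x\in B\}$ for a unique $B\subseteq X$. The content of the proposition is that $\mathcal B$ is quotient-closed iff that $B$ is $SI$-closed.

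First I would prove the easier implication. Suppose $B$ is $SI$-closed in $X$; write $B=X\setminus U$ with $U$ an $SI$-open set, and set $\mathcal B=\{[{\dn}x]\mid x\in B\}$. By the definition of the quotient topology, $\mathcal B$ is closed iff $q^{-1}(\mathcal B)$ is closed in $KB(X)$, i.e. iff $KB(X)\setminus q^{-1}(\mathcal B)=\{A\in KB(X)\mid \bigvee A\in U\}$ is open in $KB(X)$. I would show this set equals $K_{X\setminus U}^{\,c}$ in the appropriate sense; concretely, since $U$ is in particular $\tau$-open, $\{A\in KB(X)\mid \bigvee A\in U\}$ should coincide with $\{A\in KB(X)\mid A\cap U\neq\emptyset\}$ — the forward inclusion uses that $\bigvee A\in U$ together with $SI$-openness of $U$ and irreducibility of $A$ to force $A\cap U\neq\emptyset$; the reverse inclusion uses that $A\cap U\neq\emptyset$, $U$ open, $A\subseteq{\dn}(\bigvee A)$, and $U$ an upper set (Proposition 2.1(2)) to get $\bigvee A\in U$. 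But $\{A\in KB(X)\mid A\cap U\neq\emptyset\}$ is exactly the intersection with $KB(X)$ of the basic open set $U^S$ of the sobrification, i.e. it is $KB(X)\setminus K_{X\setminus U}$, which is open. Hence $q^{-1}(\mathcal B)=K_{X\setminus U}$ is closed in $KB(X)$ and $\mathcal B$ is closed.

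For the converse, let $\mathcal B\subseteq KB(X)/\!\sim$ be closed and let $B$ be the unique subset of $X$ with $\mathcal B=\{[{\dn}x]\mid x\in B\}$; equivalently $B=\{\bigvee A\mid A\in q^{-1}(\mathcal B)\}$ and, using the canonical representatives, $q^{-1}(\mathcal B)=\{A\in KB(X)\mid \bigvee A\in B\}$. By hypothesis $q^{-1}(\mathcal B)$ is closed in $KB(X)$, so $q^{-1}(\mathcal B)=K_F$ for some $F\in\Gamma(X)$. I would first argue $B=F$: the inclusion $B\subseteq F$ is clear since $A\subseteq F$ forces $\bigvee A\in{\dn}(\bigvee A)\subseteq$ (closure of $F$) $=F$; conversely for $x\in F$ we have ${\dn}x\subseteq F$, so ${\dn}x\in K_F=q^{-1}(\mathcal B)$, whence $x=\bigvee{\dn}x\in B$. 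Thus $B=F$ is $\tau$-closed. It remains to upgrade this to $SI$-closedness via Proposition 2.2: given an irreducible $G\subseteq B$ with $\bigvee G$ existing, I must show $\bigvee G\in B$. Since $B=F$ is closed, $cl(G)\subseteq F$, and $cl(G)$ is irreducible with $\bigvee cl(G)=\bigvee G$ existing (Proposition 2.1(4)), so $cl(G)\in KB(X)$ and $cl(G)\subseteq F$, i.e. $cl(G)\in K_F=q^{-1}(\mathcal B)$; therefore $\bigvee G=\bigvee cl(G)\in B$ by the description of $B$. Hence $B$ is $SI$-closed, completing the proof.

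I expect the main obstacle to be purely organisational rather than deep: keeping straight the three-way correspondence between points $x\in X$, classes $[{\dn}x]$, and the closed sets $K_F$ of $KB(X)$, and in particular making airtight the identity $q^{-1}(\mathcal B)=\{A\in KB(X)\mid \bigvee A\in B\}$ (this is where the canonical representative ${\dn}(\bigvee A)$ and the $T_0$ separation are both used). Once that identity and the equality $\{A\mid\bigvee A\in U\}=\{A\mid A\cap U\neq\emptyset\}$ for $SI$-open $U$ are in place, both directions reduce to the definition of the quotient topology and Proposition 2.2.
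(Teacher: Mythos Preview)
Your plan is sound and essentially matches the paper's argument, but one step in the necessity direction is wrong as written. When you argue $B\subseteq F$ via ``$A\subseteq F$ forces $\bigvee A\in{\dn}(\bigvee A)\subseteq F$'', the containment ${\dn}(\bigvee A)\subseteq F$ is exactly what is at stake and does \emph{not} follow from $A\subseteq F$ for a merely $\tau$-closed $F$; indeed, that implication is the $SI$-closedness you are about to prove. The repair is immediate with the bookkeeping you already set up: from $A\in q^{-1}(\mathcal B)$ and $[A]=[{\dn}(\bigvee A)]$ you get ${\dn}(\bigvee A)\in q^{-1}(\mathcal B)=K_F$, hence $\bigvee A\in F$. (Equivalently, use your first description of $B$: $x\in B\Rightarrow[{\dn}x]\in\mathcal B\Rightarrow{\dn}x\in K_F\Rightarrow x\in F$.)

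Apart from that slip, your organization only differs cosmetically from the paper's. For sufficiency you pass to the $SI$-open complement $U$ and prove $\{A:\bigvee A\in U\}=\{A:A\cap U\neq\emptyset\}$, whereas the paper shows $q^{-1}(\mathcal B)=K_B$ directly by two inclusions; these are dual formulations of the same computation. For necessity the paper takes $B$ to be the closed set with $K_B=q^{-1}(\mathcal B)$ from the start, verifies its $SI$-closedness, and then checks $\mathcal B=\{[{\dn}x]\mid x\in B\}$; your route first names $B$ via the bijection $x\mapsto[{\dn}x]$ and then identifies $B=F$, which is an extra (but harmless) step.
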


\begin{proof}Necessity. Let $\mathcal{B}$ be a closed  subset in the quotient space $KB(X)/\!\sim$. Then $q^{-1}(\mathcal{B})$ is closed in $KB(X)$, and thus
there exists $B\in\Gamma(X)$ such that $q^{-1}(\mathcal{B})=K_{B}$. Let $F$ be an irreducible subset of $X$ such that $\bigvee F$ exists and $F\subseteq B$. Then
$cl(F)\in K_{B}$, and thus $q(cl(F))=[cl(F)]\in \mathcal{B}$.  Obviously, $cl(F)\sim\dn(\bigvee F)$.
It follows that $\dn(\bigvee F)\in q^{-1}(\mathcal{B})$.
Hence $\bigvee F\in B$, and so we conclude that $B$ is an $SI$-closed subset of $X$. It suffices to prove that $\mathcal{B}=\{[\dn x]\mid x\in B\}$.
Let $x\in B$. Then $\dn x\in K_{B}$, and thus $[\dn x]\in\mathcal{B}$. This shows that $\{[\dn x]\mid x\in B\}\subseteq \mathcal{B}$.
Conversely, let $[A]\in \mathcal{B}$. Then $[A]=[\dn(\bigvee A)]$, and thus $\dn(\bigvee A)\in q^{-1}(\mathcal{B})=K_{B}$.
We have that $\bigvee A\in B$. So $\mathcal{B}=\{[\dn x]\mid x\in B\}$.

\quad Sufficiency. It suffices to prove that $q^{-1}(\mathcal{B})=K_{B}$. Let $G\in K_{B}$. Then $G\subseteq B$.
 By Proposition 2.3, we have that  $\bigvee G\in B$. Then $[\dn(\bigvee G)]\in\mathcal{B}$, and thus $G\in q^{-1}(\mathcal{B})$.
 This shows $K_{B}\subseteq q^{-1}(\mathcal{B})$.
Conversely, let $G\in q^{-1}(\mathcal{B})$. Then $[\dn(\bigvee G)]\in \mathcal{B}$, and thus there exists $x\in B$ such that $[\dn(\bigvee G)]=[\dn x].$ It follows
that $x=\bigvee G\in B$. So $G\subseteq B$. This means  $q^{-1}(\mathcal{B})\subseteq K_{B}$. We have that $q^{-1}(\mathcal{B})=K_{B}$ is a closed subset in $KB(X)$. Hence
$\mathcal{B}$ is closed in the quotient space $KB(X)/\!\sim$.\end{proof}

\begin{theorem} Let $X$ be a  $T_{0}$ space. Then $KB(X)/\!\sim$ is  homeomorphic to  $SI(X)$.
\end{theorem}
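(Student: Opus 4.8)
The plan is to write down the obvious candidate map and let Proposition 4.1 do the heavy lifting. Define $\varphi\colon SI(X)\longrightarrow KB(X)/\!\sim$ on underlying sets by $\varphi(x)=[\dn x]$. This is legitimate: by Proposition 2.1(1), $\dn x=cl(\{x\})$ is an irreducible closed set, and $\bigvee\dn x=x$ exists, so $\dn x\in KB(X)$ and $[\dn x]$ makes sense.

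First I would verify that $\varphi$ is a bijection. If $\varphi(x)=\varphi(y)$, then $\dn x\sim\dn y$, so $x=\bigvee\dn x=\bigvee\dn y=y$, giving injectivity. For surjectivity, take any $[A]\in KB(X)/\!\sim$ with $A\in KB(X)$ and set $x=\bigvee A$; then $\bigvee\dn x=x=\bigvee A$, so $A\sim\dn x$ and $[A]=[\dn x]=\varphi(x)$.

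Next I would match up the closed sets on the two sides using Proposition 4.1. The closed sets of $SI(X)$ are precisely the $SI$-closed subsets $B$ of $X$, and for each such $B$ one has $\varphi(B)=\{[\dn x]\mid x\in B\}$, which by Proposition 4.1 is exactly a closed set of $KB(X)/\!\sim$; conversely, Proposition 4.1 says every closed set of $KB(X)/\!\sim$ is of this form. Since $\varphi$ is a bijection, $\varphi^{-1}(\varphi(B))=B$ for every $SI$-closed $B$, so $\varphi$ pulls closed sets back to closed sets (continuity of $\varphi$) and pushes closed sets forward to closed sets (closedness of $\varphi$). A continuous closed bijection is a homeomorphism, so $\varphi$ exhibits $KB(X)/\!\sim$ as homeomorphic to $SI(X)$.

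The substantive work has already been done in Proposition 4.1, so I do not anticipate a genuine obstacle; the only care needed is checking that $\varphi$ is well defined and bijective, after which the homeomorphism claim is a direct translation of Proposition 4.1 into the language of the space $SI(X)$. The one point perhaps worth stating explicitly is that distinct $SI$-closed sets $B$ yield distinct subsets $\{[\dn x]\mid x\in B\}$, but this is immediate from the injectivity of $\varphi$.
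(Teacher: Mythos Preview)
Your proposal is correct and takes essentially the same approach as the paper: both arguments construct the obvious bijection between $SI(X)$ and $KB(X)/\!\sim$ and then invoke Proposition~4.1 to identify the closed sets on the two sides. The only cosmetic difference is that the paper writes down the inverse map $f\colon KB(X)/\!\sim\to SI(X)$, $f([A])=\bigvee A$, whereas you work with $\varphi\colon SI(X)\to KB(X)/\!\sim$, $\varphi(x)=[\dn x]$; otherwise the verifications of continuity and closedness are identical.
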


\begin{proof}Define a mapping  $f:KB(X)/\!\sim\longrightarrow SI(X)$ as follows:
$$\forall\ [A]\in KB(X)/\!\sim,\ f([A])=\bigvee\!A.$$
Obviously, $f$ is a bijection.
Let $B$ be an $SI$-closed subset of $X$. By Proposition 4.1,  we have that $$f^{-1}(B)=\{[A]\mid \bigvee A\in B\}=\{[\dn x]\mid x\in B\}.$$
Then $f^{-1}(B)$ is a closed subset in the
quotient space $KB(X)/\!\sim$, and thus $f$ is a continuous mapping.
Let $\mathcal{B}$ be a closed subset in the quotient space $KB(X)/\!\sim$.  By Proposition
4.1, there exists an $SI$-closed subset $B$ of $X$ such that $\mathcal{B}=\{[\dn x]\mid x\in B\}$.
Then $f(\mathcal{B})=B$,
and thus $f$ is a closed mapping.
So $KB(X)/\!\sim$ is homeomorphic to $SI(X)$.\end{proof}

\begin{remark} Let $X$ be a $T_{0}$ space. Then $KB(X)$ is a k-bounded sober space. However, $KB(X)/\!\sim$ may not be a $k$-bounded sober space.
Please see the following Example.
\end{remark}

\begin{example}
Let $P$ be the poset as in Theorem 3.3. We consider the Alexandroff topological space $(P,\Upsilon(P))$.
As we know,  $(P,\sigma(P))$ is not a $k$-bounded sober space.
By Theorem 4.2,  $(P,\sigma(P))$ is homeomorphic to $KB(P)/\!\sim$. Then
$KB(P)/\!\sim$ is not a $k$-bounded sober space.
\end{example}

\begin{definition}
A $T_{0}$ space $X$ is said to be a $qk$-bounded sober space if the quotient space $KB(X)/\!\sim$ is a k-bounded sober space.
\end{definition}

\begin{example}
(1) Every k-bounded sober space is a $qk$-bounded sober space.

(2) Let $(P,\sigma(P))$ be the $T_{0}$ space as in Theorem 3.3. Then $(P,\sigma(P))$ is a $qk$-bounded sober space.

(3) Let $(P,\Upsilon(P))$ be the $T_{0}$ space as in Example 3.6. Then $(P,\Upsilon(P))$ is a $qk$-bounded sober space.
\end{example}

Let {\bf QKBSob} denote the category of all $qk$-bounded sober spaces and continuous mapping preserving existing irreducible suprema.
Then {\bf KBSob} is a full subcategory of {\bf QKBSob}.

\begin{theorem} {\bf KBSob}  is a full reflective subcategory of {\bf QKBSob}.
\end{theorem}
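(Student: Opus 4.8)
The plan is to exhibit, for each $qk$-bounded sober space $X$, a reflection arrow into {\bf KBSob} and verify the universal property. The natural candidate for the reflector is $X \mapsto KB(X)/\!\sim$: by Theorem~4.2 this space is homeomorphic to $SI(X)$, and $X$ being $qk$-bounded sober means precisely that $KB(X)/\!\sim$ (hence $SI(X)$) is $k$-bounded sober. So first I would fix the reflection morphism to be $\eta_X := q\circ\xi_X : X \longrightarrow KB(X)/\!\sim$, where $\xi_X(x)=\dn x$ and $q$ is the quotient map; equivalently, under the homeomorphism of Theorem~4.2, $\eta_X$ is just the canonical map $X\to SI(X)$, which is continuous since $\tau_{SI}\subseteq\tau$, and which preserves existing irreducible suprema by Proposition~2.6 (the identity $X\to SI(X)$ trivially satisfies $f(\bigvee F)=\bigvee f(F)$). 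Thus $\eta_X$ is a morphism of {\bf QKBSob}, and it lands in the full subcategory {\bf KBSob}. When $X$ is already $k$-bounded sober, $\tau=\tau_{SI}$ by Proposition~2.5, so $\eta_X$ is an isomorphism, as a reflector should be on its image.

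Next I would verify the universal property: given any $k$-bounded sober space $S$ and a morphism $f : X \longrightarrow S$ in {\bf QKBSob} (continuous and preserving existing irreducible suprema), there is a unique continuous $\bar f : KB(X)/\!\sim\ \longrightarrow S$ with $\bar f\circ\eta_X=f$. For existence, I would first invoke Proposition~3.5: since $f$ preserves existing irreducible suprema, it factors through $KB(X)$ via $\tilde f : KB(X)\to S$, $\tilde f(A)=f(\bigvee A)$, with $\tilde f\circ\xi_X=f$. Because $\tilde f(A)$ depends only on $\bigvee A$, the map $\tilde f$ is constant on $\sim$-classes, so it descends to a well-defined $\bar f : KB(X)/\!\sim\ \to S$ with $\bar f\circ q=\tilde f$; continuity of $\bar f$ is immediate from the universal property of the quotient topology together with continuity of $\tilde f$. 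Then $\bar f\circ\eta_X=\bar f\circ q\circ\xi_X=\tilde f\circ\xi_X=f$. Moreover $\bar f$ itself preserves existing irreducible suprema: since both $KB(X)/\!\sim\ \cong SI(X)$ and $S$ are $k$-bounded sober, by Proposition~2.4 every continuous map between them automatically satisfies $\bar f(\bigvee F)=\bigvee\bar f(F)$; so $\bar f$ is a genuine {\bf KBSob}-morphism.

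For uniqueness, suppose $g : KB(X)/\!\sim\ \to S$ is continuous with $g\circ\eta_X=f$. Every $[A]\in KB(X)/\!\sim$ equals $[\dn(\bigvee A)]=\eta_X(\bigvee A)$ when $\bigvee A\in X$ — wait, more carefully: the point of $KB(X)/\!\sim$ represented by $A$ satisfies $[A]=q(\dn(\bigvee A))$, and one checks $q(\dn x)=\eta_X(x)$, so every element of $KB(X)/\!\sim$ is of the form $\eta_X(x)=q(\xi_X(x))$ for $x=\bigvee A$. Hence $g$ and $\bar f$ agree on all of $KB(X)/\!\sim$ as soon as they agree after precomposition with $\eta_X$, which they do. (Alternatively, one argues as in the last paragraph of the proof of Proposition~3.5, writing each class as an irreducible supremum of classes of principal ideals and using that $g$ preserves such suprema by Propositions~2.4 and~2.6.) This gives $g=\bar f$. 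Assembling these, $\eta : \mathrm{Id}\Rightarrow J\circ R$ (with $J : \textbf{KBSob}\hookrightarrow\textbf{QKBSob}$ the inclusion and $R$ the reflector $X\mapsto KB(X)/\!\sim$ on objects) is the unit of an adjunction $R\dashv J$, so {\bf KBSob} is a full reflective subcategory of {\bf QKBSob}.

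The main obstacle I anticipate is not any single step but making the surjectivity-of-$\eta_X$-onto-points argument airtight: one must check that $q\circ\xi_X$ is genuinely surjective (every $\sim$-class contains $\dn(\bigvee A)$, and $\bigvee A$ is an honest element of $X$), and that $\eta_X$ really does preserve existing irreducible suprema — the latter uses that $\bigvee_X F = \bigvee_{SI(X)} F$ for irreducible $F$, which holds because the $SI$-topology has the same specialization order as $\tau$ (Proposition~2.2 and the definition of $\tau_{SI}$). Once functoriality of $R$ is noted (a {\bf QKBSob}-morphism $h : X\to X'$ induces $KB(X)/\!\sim\ \to KB(X')/\!\sim$ by $[A]\mapsto[\dn(\bigvee h(A))]$, well-defined since $h$ preserves irreducible suprema), naturality of $\eta$ is a routine diagram chase and the proof is complete.
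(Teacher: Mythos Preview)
Your proposal is correct and follows essentially the same route as the paper: the reflector is $X\mapsto KB(X)/\!\sim$, the unit is $\eta_X(x)=[\dn x]$, the extension is $\bar f([A])=f(\bigvee A)$, and uniqueness comes from the surjectivity observation $[A]=\eta_X(\bigvee A)$. The only cosmetic difference is that you obtain continuity of $\bar f$ by factoring through Proposition~3.5 and the universal property of the quotient, whereas the paper computes $\bar f^{-1}(B)$ directly via Proposition~4.1; the two arguments are equivalent.
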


\begin{proof}Let $X$ be a $qk$-bounded sober space. Then $KB(X)/\!\sim$ is a $k$-bounded sober space. Define a mapping  $\eta: X\longrightarrow KB(X)/\!\sim$ as  follows:
$$\forall\ x\in X,\ \eta(x)=[\dn x].$$
By Proposition 4.1, we have that $\eta$ is a continuous mapping preserving  existing irreducible suprema. Let $Y$ be a $k$-bounded sober space and let $f: X\longrightarrow Y$
 be a continuous mapping preserving existing irreducible suprema. Define a mapping $\overline{f}: KB(X)/\!\sim\longrightarrow Y$ as follows:
 $$\forall\ A\in KB(X)/\!\sim,\ \overline{f}([A])=f(\bigvee A).$$
 Clearly, $\overline{f}$ is well defined. Let $B$ be a closed subset of  $Y$. By Proposition 2.6,  $B$ is an $SI$-closed subset of  $Y$.
 Then $f^{-1}(B)$  is an $SI$-closed subset of $X$.
 Since $$\overline{f}^{-1}(B)=\{[A]\mid f(\bigvee A)\in B\}=\{[\dn x]\mid x\in f^{-1}(B)\},$$
  $\overline{f}^{-1}(B)$ is a closed subset in the quotient space $KB(X)/\!\sim$. Then $\overline{f}$ is a continuous
 mapping. Obviously, $\overline{f}\circ\eta=f$.
 Let $g: KB(X)/\!\sim\longrightarrow Y$ be a continuous mapping such that $g\circ\eta=f$. For every $[A]\in KB(X)/\!\sim$,
 $$g([A])=g([\dn (\bigvee A)])=g(\eta(\bigvee A))=f(\bigvee A)=\overline{f}([A]).$$ Then $g=\overline{f}$.
 Thus {\bf KBSob}  is a full reflective subcategory of {\bf QKBSob}.\end{proof}

\bibliographystyle{amsplain}

\end{document}